\newtheorem{theorem}{Theorem}
\newtheorem{lemma}{Lemma}
\newtheorem{corollary}{Corollary}
\newtheorem{proposition}{Proposition}
\newtheorem{definition}{Definition}
\newtheorem{remark}{Remark}
\newcommand{\beq}{\begin{equation}}
\newcommand{\eeq}{\end{equation}}
\newcommand{\Z}{\mathbb{Z}}
\newcommand{\N}{\mathbb{N}}
\newcommand{\T}{\mathbb{T}}
\newcommand{\R}{\mathbb{R}}
\renewcommand{\a}{\alpha}
\newcommand{\ba}{\begin{array}}
\newcommand{\ea}{\end{array}}
\newcommand{\bal}{\begin{aligned}}
\newcommand{\eal}{\end{aligned}}
\newtheorem{claim}{Claim}
\def\cW{\mathcal W}
\def\cG{\mathcal G}
\def\cE{\mathcal E}
\def\cP{\mathcal P}
\def\eps{\epsilon} 
\def\Nzero{\mathcal N_0}
\def\None{\mathcal N_1}
\def\vphi{\varphi} 
\def\jjj{\{0,\ldots,4\}}
\title{Analytic reparametrizations of translation toral flows with countable Lebesgue spectrum}
\author{Fatna Abdedou, Bassam Fayad, Arezki Kessi}
\date{}
\begin{document}
\maketitle
\begin{abstract}
   We give an example of a  real analytic reparametrization of a minimal translation flow on $\mathbb{T}^{5}$ that has a Lebesgue spectrum with infinite multiplicity.
\end{abstract}

\section{Introduction} Inspired by Kolmogorov's 1954 ICM talk \cite{Kicm}, the following questions arise: {\it Can a completely integrable Hamiltonian flow be perturbed in the real analytic category so that the perturbed flow has  an invariant torus exhibiting dynamics with a maximal spectral type equivalent to the Lebesgue measure on $\R$? Can a reparametrized translation flow of the torus have a maximal spectral type that is equivalent to the Lebesgue measure on $\R$? }


By a classical construction that we recall below, a positive answer to the second question, with reparametrization functions arbitrarily close to 1,  immediately yields examples of perturbations that answer positively the first question.  The aim of this paper is to prove the following. Let $\T=\R/\Z$.

\begin{theorem}\label{Theo}
There exists a minimal translation flow on $\T^5$, and a strictly positive real entire function $\Phi$ defined on $\mathbb{T}^{5}$, such that the reparametrization of the irrational flow $(\alpha,1)$ by $\Phi$ has a Lebesgue spectrum with infinite multiplicity. Moreover, $\Phi$ can be chosen arbitrarily close to $1$ on any bounded domain.  
 \end{theorem}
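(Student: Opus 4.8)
\emph{The plan is to reduce the problem to the spectral analysis of a special flow and then to engineer the roof function so that a rich family of observables has absolutely continuous correlations.} First I would pass from the reparametrized flow on $\T^5$ to its suspension representation. Taking the cross-section $\{x_5=0\}\cong\T^4$, the linear flow $(\alpha,1)$ induces the translation $R_\alpha\colon x\mapsto x+\alpha$ on $\T^4$ with return time $1$; after reparametrization by $\Phi$ the return time becomes the real-analytic roof
\[
\vphi(x)=\int_0^1\Phi\big((x,0)+t(\alpha,1)\big)\,dt,\qquad x\in\T^4,
\]
and the reparametrized flow is measure-isomorphic to the special flow $(R_\alpha)^{\vphi}$. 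I would write $\Phi=1+\eta$ with $\eta$ small and analytic, so that $\vphi=1+\psi$; the case $\psi=0$ is the linear flow, which has pure point spectrum, and the whole game is to choose $\alpha$ Liouville with rational approximations $p_n/q_n$ and $\psi=\sum_n\psi_n$ a lacunary sum of analytic ``blocks'' resonating with the scales $q_n$, so that the perturbation destroys the discrete spectrum and manufactures a Lebesgue component.

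\emph{Next I would set up the spectral criterion.} For observables of product type $F_k(x,s)=e^{2\pi\i\, k\cdot x}\,h(s)$ with $k\in\Z^4$, the suspension structure expresses the correlation $\langle U_tF_k,F_k\rangle$ in terms of the \emph{twisted Birkhoff integrals} $\int_{\T^4}e^{2\pi\i(k\cdot x-\lambda\vphi_n(x))}\,dx$, where $\vphi_n=\sum_{j<n}\vphi\circ R_\alpha^j$ and $n$ tracks the number of returns during time $t$. Since $\langle U_tF_k,F_k\rangle=\int_\R e^{2\pi\i\lambda t}\,d\sigma_{F_k}(\lambda)$ is the Fourier transform of the spectral measure, the sufficient condition I would exploit is integrability of the correlation: if $t\mapsto\langle U_tF_k,F_k\rangle$ lies in $L^2(\R)$ then, by Plancherel, $\sigma_{F_k}$ is absolutely continuous with $L^2$ density, and $L^1$ integrability gives a bounded continuous density. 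Thus the problem becomes: produce enough decay of the twisted ergodic integrals, uniformly in $\lambda$ over the relevant ranges, to make these correlations integrable.

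\emph{The construction itself is the stretching mechanism.} I would choose each block $\psi_n$ so that its Fourier support resonates with $q_n$ in one of the four base directions; then on the time scale $t\approx q_n$ the Birkhoff sum $\vphi_n$ develops large, controlled oscillation in $x$, and a van der Corput / stationary-phase estimate forces the twisted integrals $\int e^{2\pi\i(k\cdot x-\lambda\vphi_n(x))}dx$ to decay. The point of the Liouville choice of $\alpha$ is that the analyticity of $\vphi$ is \emph{spent progressively} over the diverging scales $q_n$: at each finite time only finitely many blocks have ``activated,'' so there is no contradiction with the rapid decay of analytic Fourier coefficients that would otherwise force singular spectrum. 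Using the room afforded by the four-dimensional base $\T^4$ (this is why $d=5$), I would let successive blocks act in independent coordinate directions, so that their contributions to $\sigma_{F_k}$ superpose to a measure equivalent to Lebesgue on all of $\R$ rather than on a bounded band.

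\emph{Finally I would upgrade to infinite multiplicity and secure the smallness of $\Phi$.} The $\T^4$-base structure provides infinitely many characters $k$ whose cyclic subspaces $Z(F_k)$ are mutually orthogonal; exhibiting for each such $k$ a spectral measure $\sigma_{F_k}$ equivalent to Lebesgue forces the absolutely continuous part to have infinite multiplicity, i.e.\ countable Lebesgue spectrum. The requirement that $\Phi$ be real entire and arbitrarily close to $1$ on bounded (complex) domains is met by letting the amplitudes of the $\psi_n$ decay summably while the resonant frequencies grow: the analytic norm of each block in a strip of width $w$ is controlled by $(\text{amplitude})\times e^{w\cdot(\text{frequency})}$, and a lacunary balancing keeps the total small in every strip. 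The step I expect to be the main obstacle is exactly this quantitative reconciliation of analyticity with an absolutely continuous spectrum: proving that the twisted Birkhoff integrals decay fast enough, uniformly in $\lambda$, to yield $L^2$-integrable correlations, which demands a delicate matching of the Liouville exponents of $\alpha$, the block frequencies, and the four independent stretching directions.
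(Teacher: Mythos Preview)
Your reduction to the special flow over $R_\alpha$ on $\T^4$ and the stretching mechanism are on the right track, but the multiplicity step contains a genuine gap. You claim that the characters $F_k(x,s)=e^{2\pi\i\,k\cdot x}h(s)$ have mutually orthogonal \emph{cyclic subspaces} $Z(F_k)$, so that $\sigma_{F_k}\sim\lambda$ for infinitely many $k$ forces countable Lebesgue multiplicity. This is false: the vectors $F_k$ are orthogonal, but under the special flow the return count $N(x,s,t)$ depends on $x$, so $T^tF_k$ projects nontrivially onto other Fourier modes and the cyclic subspaces have no reason to be orthogonal. Absolute continuity of the spectral measures of a dense family says nothing about multiplicity by itself. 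The paper handles multiplicity by a separate criterion (adapted from \cite{FFK}): for each $n$ one builds functions $f_1,\dots,f_n$ supported on thin flow-boxes over shrinking multi-intervals $J_m\subset\T^4$ and proves $\|\langle f_i\circ T^t,f_j\rangle\|_{L^2(\R)}\to\delta_{ij}$ together with spectral densities close to $1$ on any prescribed compact. This requires $L^2$-in-$t$ decay of $\langle f\circ T^t,g\rangle$ for $f$ a \emph{smooth coboundary} and $g$ supported in the flow-box, integrated over $|t|\geq T_{J_m}$; decay for individual exponentials is not what is used.

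A second point concerns the role of the four base directions. In the paper the mechanism is not that the directions ``superpose to cover all of $\R$''; rather, $\alpha_1,\dots,\alpha_4$ are chosen with alternated convergent denominators $q_n^{(1)}\ll q_n^{(2)}\ll q_n^{(3)}\ll q_n^{(4)}\ll q_{n+1}^{(1)}$ so that for \emph{every} time $t$ at least three of the four directions carry uniform stretch of the Birkhoff sums. In a single stretching direction the bad set (weak stretch) has measure $\sim t^{-1/4+\eps}$; with three simultaneous good directions the residual bad set has measure $O(t^{-3/4+3\eps})=o(t^{-1/2-\eps})$, which is precisely what yields $L^2$-summable correlations when one of the observables is a coboundary. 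With only one or two good directions at a time (as over $\T^2$ or $\T^3$) this arithmetic fails, and that is the exact reason $d=5$ is needed. Your van der Corput estimate for $\int e^{2\pi\i(k\cdot x-\lambda\vphi_n(x))}\,dx$ would have to reproduce this $t^{-1/2-\eps}$ decay uniformly in $\lambda$; in the paper this is achieved not by stationary phase on exponentials but by the coboundary trick combined with the uniform-stretch bounds $r^t_I,S^t_I$ on good intervals.
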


Note that arbitrarily small perturbations of the reparametrized flow, such as changing slightly $\a$ or  $\Phi$, can render the flow analytically conjugated to a translation flow, who has a pure point spectrum.

 Since the reparametrization function can be chosen arbitrarily close to $1$, it is a classical observation that this automatically gives examples of perturbations of integrable Hamiltonians with invariant tori carrying uniquely ergodic dynamics with infinite Lebesgue spectrum (ILS). Moreover, the integrable Hamiltonian can be chosen to be convex. Indeed,  consider the completely integrable system on $\T^5\times \R^5$ given by
$$H(\theta,r)=\Phi(\theta)(\frac{1}{2}\sum_{j=1}^5 r_j^2-1).$$
Denote $X^t_H$ the corresponding flow. The energy surface $\{H=0\}$ is foliated by invariant tori for $X^t_H$ on which the restricted flow is a reparametrization by $\Phi$ of the translation flow of frequency vector $r$. 
One can then pick the vector $r$ and the function $\Phi$ to be as in the theorem and get the invariant torus with ILS. 

Kolmogorov~\cite{kolmogorov} showed that  reparametrizations of translation flows on $\T^2$ are typically conjugated to translation flows, since it suffices for this that the slope of the translation flow belongs to the full measure set of  Diophantine numbers.  He also observed that more exotic behaviors should be expected for the reparametrized flows in the case of Liouville slopes. Shklover indeed obtained in \cite{shklover} examples of real analytic reparametrizations of linear flows on the $2$-torus that were weak mixing (continuous spectrum). 
Not long after Shklover's result, Katok \cite{katok} and  Kochergin~\cite{Koc1} showed the absence of mixing for non-singular conservative $C^1$ flows on the $2$-torus. Such flows can be viewed as special flows above a circular rotation $R_\a$ and the absence of mixing is based on the Denjoy-Koksma cancellation property (DKP) for Birkhoff sums above an irrational rotation. In the case of a special flow with a $C^1$ ceiling function of average $1$, one can easily see from the DKP that the special flow satisfies $T^{q_n} \to {\rm Id}$ uniformly, where $q_n$ is the sequence of denominators of the best rational approximations (or convergents) of $\a$.

For reparametrizations of minimal translation flows on higher dimensional tori the situation is quite different. Kolmogorov's linearization result still holds for almost all frequency vectors as observed by Herman. However, Yoccoz showed in \cite{Y} that the Denjoy-Koksma cancellation property has no counterpart in higher dimensions. Using frequency vectors $\a \in \R^2$ with coordinates that are Liouville numbers with a special alternating configuration of the denominators of their convergents, he constructed a function of which the Birkhoff sums above the (minimal) translation $R_\a$ do not have the DKP.  

More precisely,  
the main ingredient in the construction of \cite{Y} is the use of a vector $\a=(\a_1,\a_2)$ such that the denominators,
$q_n^{(1)}$ and $ q_n^{(2)}$
of the convergents  of $\a_1$ and $\a_2$ are alternated and such that each term in the increasing sequence $\ldots,q_n^{(1)},q_n^{(2)},q_{n+1}^{(1)},q_{n+1}^{(2)},\ldots$ is exponentially larger than the precedent one. 
Then  \cite{Y} constructs a real analytic function   
$A:\T^2 \to \mathbb C$ 
with zero integral such that for almost every $(x_1,x_2) \in \T^2$,  $|A_m(x_1,x_2)| \to \infty$ as $m\to \infty$.

Using the construction of Yoccoz, the second author constructed in \cite{Fanalytic} mixing analytic reparametrizations for a class of minimal translation flows on $\T^3$. These flows can be viewed as special flows above a minimal translations $R_\a$ of the two torus with $\a$ as in \cite{Y}, and the mixing mechanism comes from the uniform stretch of the Birkhoff sums of an adequately chosen ceiling function $\vphi$ above $R_\a$ (see Figure Section \ref{sec.def} and Section \ref{sec.def} for the definition of the uniform stretch measurement $S_J^t$).

 To be more precise,  the
disposition of the best approximations of $\a_1$ and $\a_2$, allows to consider a function $\vphi$ that is a sum of two functions $\vphi_1(x_1)+\vphi_2(x_2)$ such that the  ergodic
sums $\varphi_m$ of the function $\varphi$, for any  $m$ sufficiently
large, will be always stretching (i.e. have big derivatives at most points), in one
or in the other of the two directions, $x_1$ or $x_2$, depending on
whether $m$ is far from $\{q_n\}$ or far from $\{q'_n\}$. And this stretch
will increase when $m$ goes to infinity. So when time goes from $0$ to
$t$, $t$ large, the image of a small typical interval $J$ from the basis
${\T}^2$ (depending on $t$ the intervals should be taken along the $x_1$
or the $x_2$ axis) will be more and more  stretched  in the
fibers' direction, until the image of $J$ at time $t$ will consist of
a lot of almost vertical curves whose projection on the basis lies
along a piece of a trajectory under the translation $R_{\a}$. By
unique ergodicity these projections become more and more uniformly
distributed, and so will $T_{\a,\vphi}^t(J)$. For each $t$, and except for
increasingly small subsets of it (as function of $t$), it is possible 
to cover the basis with such ''good'' intervals. Besides, what is
true for $J$ on the basis is true for $T_{\a,\vphi}^s(J)$ at any bounded height $s$ on the
fibers. So applying Fubini Theorem, we will obtain the asymptotic uniform
distribution of any measurable subset, which is, by definition, the
mixing property (see Figure \ref{FigFlowMix}).

\begin{figure}[htb]
    \centering
    \resizebox{!}{3cm}{\includegraphics{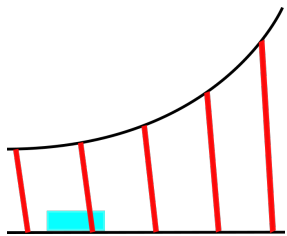}}

\caption{Mixing mechanism for special flows: the image of a rectangle is
a union of long narrow strips which fill densely the phase space.} 
    \label{FigFlowMix}
\end{figure}

Uniform stretch is also responsible for mixing in the conservative surface flows  with one degenerate singularity studied by Kochergin in the 1970s \cite{Koc2}.  Kochergin flows are special 
flows under an integrable ceiling  function with at least one power singularity (see Figures \ref{sym} and \ref{orbits}  and the precise definition of special flows in Section \ref{sec.def}). The uniform stretch of the Birkhoff sums in this context comes from the shear between different orbits as they go near the singularity. 

\begin{figure}[htb]
 \centering
  \resizebox{!}{4cm}{\includegraphics[angle=0]{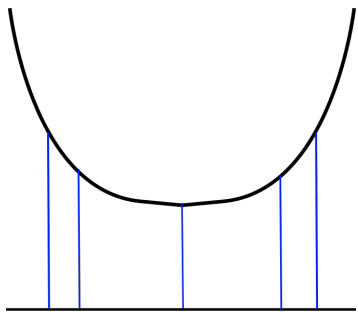}}
\caption{\small  Representation of a $2$-torus flow with one degenerate saddle  as a special flow under a ceiling function with a  power-like singularity.} 
  \label{sym}
\end{figure}

The paper \cite{FFK} proved ILS for Kochergin flows. To start with, it established square summable decay of correlations for observables that are smooth coboundaries above the flow. This suffices to conclude that the maximal spectral type is absolutely continuous with respect to the Lebesgue measure. The use of  smooth coboundaries to get faster decay estimates than for general smooth observables or characteristic functions was inspired from \cite{FU} that established Lebesgue maximal spectral type for time changes of horocyclic flows. The crucial point in the use of  smooth coboundaries is that it allows to translate all the uniform stretch in speed of mixing. 

To establish ILS for Kochergin flows, as well as for time changes of horocyclic flows, the paper \cite{FFK}  introduced a criterion based on the decay of correlations that allows to deduce the ILS property. The criterion exploits the speed of equi-distribution of small sets  transversal to the flow direction to build observables that have almost orthogonal cyclic spaces and that have a spectral type close to being Lebesgue. An abstract statement then allows to conclude the ILS property. 

The delicate point with Kochergin flows, in comparison with time changes of horocyclic flows for example, is that the shear between orbits is not uniform. Equivalently, the stretch of the Birkhoff sums of the ceiling function can vanish or be very weak on parts of the phase space in a way that is also not uniform in time. The key factor is how close to the singularity different orbits get to before time $t$.  

In the mixing reparametrizations of translation flows the same phenomenon of non uniform shear in space and time appears because the DKP still appears for some special times on part of the phase space. A precise computation shows that, at time $t$, given a direction of uniform stretch, the set with uniform stretch weaker than $t^{1-2\beta}$ is of measure that can be compared to $t^{-\beta+\epsilon}$, for any $\epsilon>0$ (see Corollary \ref{coro.good}). This means that for times with only one direction of stretch as it happens for the special flows above $\T^2$, 
we will get a uniform stretch larger than $t^{1/2}$ away from a set of measure $t^{-1/4+\epsilon}$. The size of the bad set in this case is too large since we seek a square summable decay of correlations. However, if for each $t$ there are 3 independent directions of stretch, the stretch will be stronger than $t^{1/2+\epsilon}$ on a set of measure $t^{-3/4+3\epsilon}=o(t^{-1/2-\epsilon})$. 

A simple observation concerning the construction of the frequency vectors as in \cite{Y,Fanalytic} allows to construct frequencies  $\a\in \R^4$ and ceiling functions $\vphi$ such that  for each time $t$ there are three directions of uniform stretch and gives examples of special flows above $\T^4$ translations with ILS. This corresponds to reparametrizations of minimal flows on $\T^5$. Our method, that insures that for each $t$ the decay is less than $t^{-1/2-\eps}$, does not allow to treat the case of $\T^3$ and $\T^4$. 

In principle one could apply the strategy adopted in \cite{FFK} to the case of the mixing reparametrizations on $\T^3$ or $\T^4$. That is, 
accept the existence of a sequence of special times $(t_n)$ (such as the multiples of the denominators of the convergents) at which the small measure sets that are bad (with no strong uniform stretch) lead to a decay slower than $t_n^{-1/2}$, but still recover square summable correlations due to 
the fact that for most of the times that are in a medium scale neighborhood of the times $t_n$, there is some small power decay of correlations on the bad set itself. This is what was done for Kochergin flows in \cite{FFK} because the decay was slower than $t^{-1/2}$ for some special times. We believe the same can be done for  reparametrized mixing flows on $\T^4$ and possibly on $\T^3$, but the proof would then be much more technical than the one given here for reparametrizations on $\T^5$.

\section{Notations and definitions} \label{sec.def}


\begin{itemize}
\item {\it Special flows above translations of the torus.} Let $R_\a:\T^d\to\T^d$, $R_\a(\theta)=\theta+\a \; \text{\rm mod }1$, where $\a\in [0,1]^d$ is a vector such that $1,\a_1,\ldots,\a_d$ are independent over $\Z$.  
 Let $\vphi \in L^1(\T^d,\lambda_{\T^d})$ be a strictly positive function. 
 We recall that the special flow $T^t:=T^t_{\a,{\vphi}}$ constructed above $R_\a$ and under $\vphi$ is the flow defined almost everywhere  by 
\begin{eqnarray*}
\T^d \times \R / \sim  &  \rightarrow &  \T^d \times \R / \sim  \\
 (\theta,s) & \rightarrow & (\theta,s+t), \end{eqnarray*}
where $\sim$ is the identification, defined on $\T^d \times \R$, 
\begin{equation*}
(\theta, s + \vphi(\theta)) \sim (R_\a(\theta),s) \,.
\end{equation*}
Equivalently (see Figure \ref{orbits}), this special flow  is defined for $s\geq 0$ and for all times $t\in \R$ such that $t+s \geq 0$ (with a 
similar definition for times $t\in \R$ such that $t+s < 0$) by 
\begin{equation}\label{eq:refspec}T^t(\theta,s) = (\theta+N(\theta,s,t)\a , t+s-   S_{N(\theta,s,t)}\vphi (\theta)),
\end{equation} 
   where $N(\theta,s,t)$ is the unique integer such that 
\begin{equation}
\label{D-C}
0 \leq  t+s-   S_{N(\theta,s,t)}{\vphi} (\theta) \leq {\vphi}(\theta+N(\theta,s,t)\a),\end{equation}    
and
$$
S_n{\vphi}(\theta)=\left\{\begin{array}{ccc}
{\vphi}(\theta)+\ldots+{\vphi}(R_\alpha^{n-1}\theta) &\mbox{if} & n>0\\
0&\mbox{if}& n=0\\
-({\vphi}(R_\alpha^n\theta)+\ldots+{\vphi}(R_\alpha^{-1}\theta))&\mbox{if} &n<0.\end{array}\right.$$

\begin{figure}[htb] 
 \centering
  \resizebox{!}{4cm}{\includegraphics[angle=0]{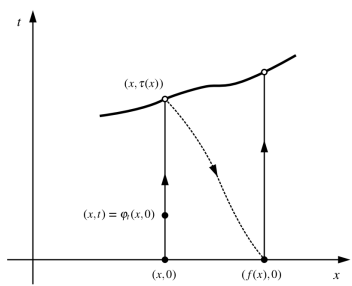}}  
\caption{\small The orbit of a point by the special flow above a transformation $f$ and under a bounded ceiling function $\tau$.}
\label{orbits}
\end{figure}

\item We recall the notations $M=\{(x,s)\in \T^4\times\R\;:\; 0\leq s< \vphi(x)\}$ for the configuration space of the flow $T^t_{\a,\vphi}$ and  $\mu$ for the measure  equal to the restriction to $M$ of the product of the Haar measures $\lambda_4:=\lambda_{\T^4}$ on the torus $\T^4$ and $\lambda:=\lambda_\R$ on the real line~$\R$. 

For a given $\zeta>0$, let us denote
\begin{equation}
\label{Mzeta}
M_\zeta:=\{(x,s)\in M\;:\;  0\leq s \leq \vphi(x)-\zeta\}.
\end{equation}


\item For $j\in \jjj$, we denote by $I^j$ intervals in the $j$-direction : $I^j\subset M$ is of the form $\{z=(x_1,x_2,x_3,x_4,s) \in M : x_j \in [a,b]\}$, for some $0\leq a<b\leq 1$. We also call such intervals $j$-intervals. 

\item For a point $z\in M$, we denote by $\bar z$ its projection on the base $\T^4$. We denote by $\pi_j(z)$ the projection of $z$ on the $j^{\rm th}$-coordinate of $\T^4$.

\item On a $j$-interval $I^j$, we denote by $\lambda$ the Lebesgue measure.

\item Given an interval $I \subset M$ (in any direction) and $t\in \R$, we define  
\begin{align} \label{def.rt} 
r^{t}_{I}&=\inf_{z\in I}\lvert \varphi'_{N(z,t)}(\bar z)\rvert \\
   \label{def.st}  S^{t}_{I}&=\inf_{x\in I}\frac{(\varphi'_{N(z,t)}(\bar z))^{2}}{\varphi''_{N(z,t)}(\bar z)}.
\end{align}

The quantity $S^{t}_{I}$ measures the {\it uniform stretch} of the Birkhoff sums above the interval $I$. Indeed, when $S^t_I$ is large, this corresponds to almost linear expansion of the interval $I\times \{0\} \subset M$ under the flow at time $t$.  
\end{itemize}

\section{The construction and precise statements}

\subsection{The choice of the frequency vector}
Following \cite{Y} (see also \cite{Fanalytic} in the context of special flows), let $Y\subset [0,1]^4$ be the set of vectors $\alpha:=(\alpha_{1},...,\alpha_{4})$ whose sequences of best approximations $q_{n}^{(1)}$,..., $q_{n}^{(4)}$ satisfy the following for all $n\geq 1$ :
 \begin{equation}\label{Arith1}
q_{n}^{(2)}\geq e^{nq_{n}^{(1)}}, \quad q_{n}^{(3)}\geq e^{nq_{n}^{(2)}},  \quad q_{n}^{(4)}\geq e^{nq_{n}^{(3)}},  \quad q_{n+1}^{(1)}\geq e^{nq_{n}^{(4)}}.
\end{equation}

As in \cite{Y}, we have that 
\begin{lemma}\label{Arithmetric}
The set $Y$ is a dense continuum in $[0,1]^4$. 
\end{lemma}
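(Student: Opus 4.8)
The plan is to encode everything through continued fractions. Recall that an irrational $\alpha_i\in(0,1)$ corresponds bijectively to its sequence of partial quotients $(a_k^{(i)})_{k\ge1}$ of positive integers, and that the convergent denominators satisfy $q_0^{(i)}=1$, $q_1^{(i)}=a_1^{(i)}$, $q_n^{(i)}=a_n^{(i)}q_{n-1}^{(i)}+q_{n-2}^{(i)}$. Two elementary facts will carry the whole argument. First, $q_n^{(i)}$ depends only on $a_1^{(i)},\dots,a_n^{(i)}$ and, since $q_n^{(i)}\ge a_n^{(i)}q_{n-1}^{(i)}$, it can be made as large as one wishes by enlarging the single partial quotient $a_n^{(i)}$ while keeping the previous ones fixed. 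Second, prescribing the first $m$ partial quotients confines $\alpha_i$ to a cylinder of length $<1/(q_m^{(i)})^2$, so the tail $(a_k^{(i)})_{k>m}$ may be chosen freely without spoiling an approximation of accuracy $1/(q_m^{(i)})^2$. In other words, the constraints \eqref{Arith1} are a family of lower bounds on the $q_n^{(i)}$: they only dictate how fast the four expansions must eventually grow and in which interleaved order, never the values being approximated.

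For density I would argue as follows. Fix a target $\beta=(\beta_1,\dots,\beta_4)\in[0,1]^4$ and $\e>0$, replacing any rational $\beta_i$ by a nearby irrational. Choose $m$ large enough that $1/(q_m^{(i)})^2<\e$ for each $i$, and set $a_1^{(i)},\dots,a_m^{(i)}$ equal to the first $m$ partial quotients of $\beta_i$; then $\lvert\alpha_i-\beta_i\rvert<\e$ for every completion of the expansion. It remains to select the tails so that \eqref{Arith1} holds, and I would do this greedily, running through the indices in the increasing order dictated by the chain $q_n^{(1)},q_n^{(2)},q_n^{(3)},q_n^{(4)},q_{n+1}^{(1)},\dots$: at each step the quantity on the right-hand side of the relevant inequality in \eqref{Arith1} has already been fixed, so by the first fact above I can pick the next partial quotient of the coordinate appearing on the left large enough to exceed the prescribed exponential. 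Performing this for all $n$ produces $\alpha\in Y$ with $\lVert\alpha-\beta\rVert_\infty<\e$.

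For the size of $Y$, the point is that at every step of the greedy construction the chosen partial quotient is only required to exceed a threshold, so it may be taken to be either of two consecutive integers while leaving \eqref{Arith1} intact. Recording these binary choices along the tail gives a map $\{0,1\}^{\N}\to Y$ which is continuous (a change of the choice at position $k$ perturbs $\alpha$ by $O(1/(q_k)^2)$) and injective (distinct partial-quotient sequences yield distinct irrationals). As a continuous injection of a compact space, it is a homeomorphism onto its image, so $Y$ contains a copy of the Cantor set inside every ball of $[0,1]^4$; in particular $\lvert Y\rvert=2^{\aleph_0}$, and together with the previous paragraph this shows $Y$ is a dense set of the power of the continuum. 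If one also wants $1,\alpha_1,\dots,\alpha_4$ to be rationally independent for minimality, this rules out only countably many affine relations, each avoidable using the same binary freedom, so the conclusion is unaffected.

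The main obstacle I expect is not any single estimate but the coupling of the four expansions across levels: the construction must be organized as one interleaved induction rather than four independent ones. The ordering of the chain $q_n^{(1)}\!\to q_n^{(2)}\!\to q_n^{(3)}\!\to q_n^{(4)}\!\to q_{n+1}^{(1)}$ is precisely what guarantees that whenever a bound of \eqref{Arith1} is imposed its right-hand side is already determined, so no circular dependency arises. Checking that prescribing arbitrary initial strings (needed for density) never obstructs restarting this interleaved growth is the substance of the proof, and it reduces entirely to the first elementary fact above.
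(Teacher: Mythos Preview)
Your proof is correct and follows essentially the same route as the paper: encode the construction through continued fractions, use the freedom in the initial partial quotients to get density, and use the freedom in the tail (each new partial quotient need only exceed a threshold, so infinitely many---in your version, at least two---choices remain) to get cardinality of the continuum. Your write-up is in fact more detailed than the paper's, which simply notes that each coefficient can be picked in an infinite semi-interval and that the initial block can be arbitrary; your explicit Cantor-set injection and the remark on rational independence are pleasant additions but not structurally different.
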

\begin{proof}[Proof of Lemma \ref{Arithmetric}]
First recall that any irrational number $\alpha \in \mathbb{R-Q}$ can be
written as a continued fraction expansion 
\begin{equation*}
\a=\left[ a_{0},a_{1},...\right]
=a_{0}+1/(a_{1}+1/(a_{2}+...1/a_{n-1}+1/a_{n})...),
\end{equation*}%
where $\left\{ a_{j}\right\} _{j\geq 1}$ is a sequence of integers $\geq 1$
and $a_{0}=\left[ \alpha \right] .$
Conversely, any infinite sequence $\left\{ a_{j}\right\} _{j\geq 1}$ corresponds to a unique number $\alpha $. The convergents $\left( p_{n},q_{n}\right) \in 
\mathbb{Z}\times \mathbb{Z}^{\ast }$of $\alpha $ are defined by $a_{j}$ in
the following way%

\begin{equation}
\left\{ 
\begin{array}{ccc} \label{eq.conv}
p_{n}=a_{n}p_{n-1}+p_{n-2} & for\text{ }n\geq 2,\text{ } & p_{0}=a_{0},\text{
}p_{1}=a_{0}a_{1}+1 \\ 
q_{n}=a_{n}q_{n-1}+q_{n-2} & for\text{ }n\geq 2, & q_{0}=1,\text{ }%
q_{1}=a_{1}.%
\end{array}%
\right.
\end{equation}
For an arbitrary choice $n_0$ and an arbitrary choice of $a_n^{(j)}$, for $j\in \jjj, n\leq n_0$, it is straightforward from \eqref{eq.conv} to construct inductively $a_{n_0+1}^{(1)},a_{n_0+1}^{(2)}, a_{n_0+1}^{(3)},a_{n_0+1}^{(4)},a_{n_0+2}^{(1)},\ldots$ such that \eqref{Arith1} holds. Indeed, suppose all coefficients are chosen up to $a_n^{(4)}$. Then we just have to take $a_{n+1}^{(1)}>e^{nq_n^{(4)}}$, then $a_{n+1}^{(2)}>e^{(n+1)q_{n+1}^{(1)}}$, then $a_{n+1}^{(3)}>e^{(n+1)q_{n+1}^{(2)}}$, then 
$a_{n+1}^{(4)}>e^{(n+1)q_{n+1}^{(3)}}$.
Since each time we have to pick a coefficient we can choose it in an infinite semi-interval of the integers, the set $Y$ is clearly a continuum. The fact that the coefficients can be chosen arbitrarily for $n\leq n_0$ with $n_0$ arbitrarily large, implies the density of the set $Y$. 
\end{proof} Define for $j\in \jjj$
\begin{equation} \label{eq.Tnj} T_{n}^{j}=[e^{nq_{n}^{(j)}},\frac{q_{n+1}^{(j)}}{n+1}]\end{equation}

The important consequence from our definition of the set of $Y$ is the following 
\begin{lemma} \label{lemma.Tnj} Every  $t \geq 0$ belongs to at least three intervals $T_{n_j}^{j}$. More precisely one of the following holds 
\begin{enumerate}
\item $t\in T_n^1 \bigcap T_n^2 \bigcap T_n^3$
\item $t\in T_n^2 \bigcap T_n^3 \bigcap T_n^4$
\item $t\in T_{n}^3 \bigcap T_n^4 \bigcap T_{n+1}^1$
\item $t\in T_n^4 \bigcap T_{n+1}^1 \bigcap T_{n+1}^2$
\end{enumerate}
\end{lemma}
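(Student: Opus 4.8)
The plan is to merge all the intervals $T_n^j$ into a single increasing chain and to show that this chain overlaps ``three deep'', so that every sufficiently large $t$ automatically lies in three \emph{consecutive} members of the chain; the four alternatives in the statement will then be nothing more than the four residues modulo $4$ of the position of such a triple. Concretely, I would set $I_{4(n-1)+j}:=T_n^j$ for $j\in\{1,2,3,4\}$, so the chain reads $T_1^1,T_1^2,T_1^3,T_1^4,T_2^1,\dots$, and write $I_k=[l_k,r_k]$ with
\[
l_{4(n-1)+j}=e^{\,nq_n^{(j)}},\qquad r_{4(n-1)+j}=\frac{q_{n+1}^{(j)}}{n+1}.
\]
Because the denominators satisfy $q_n^{(1)}<q_n^{(2)}<q_n^{(3)}<q_n^{(4)}<q_{n+1}^{(1)}<\cdots$ along the chain, both $(l_k)_k$ and $(r_k)_k$ are strictly increasing. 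Hence for fixed $t$ the set $\{k:\ t\in I_k\}=\{k:\ l_k\le t\le r_k\}$ is a block of consecutive integers, and it suffices to show this block has length at least three.

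The heart of the matter is the single family of inequalities
\[
l_{k+3}\le r_k\qquad\text{for all }k\ge 1,
\]
i.e. each interval still meets the one three steps ahead of it. Spelled out in the four residue classes modulo $4$, this becomes four inequalities, each governed by exactly one relation of \eqref{Arith1}: for $k=4(n-1)+1$ one needs $e^{\,nq_n^{(4)}}\le q_{n+1}^{(1)}/(n+1)$, i.e. $q_{n+1}^{(1)}\ge (n+1)e^{\,nq_n^{(4)}}$, controlled by $q_{n+1}^{(1)}\ge e^{\,nq_n^{(4)}}$; the remaining three residues are controlled by $q_{n+1}^{(2)}\ge e^{(n+1)q_{n+1}^{(1)}}$, by $q_{n+1}^{(3)}\ge e^{(n+1)q_{n+1}^{(2)}}$ and by $q_{n+1}^{(4)}\ge e^{(n+1)q_{n+1}^{(3)}}$. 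In every case the exponential separation provided by \eqref{Arith1} dwarfs the polynomial factor $n+1$ coming from the right endpoints — note that each denominator appearing here is itself an iterated exponential in $n$, hence incomparably larger than $n+1$ — so the inequality holds with enormous room to spare.

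Granting $l_{k+3}\le r_k$, the conclusion is immediate. Given $t$ with $l_3\le t$, set $b=\max\{k:\ l_k\le t\}\ge 3$. Then $l_{b-2}<l_{b-1}<l_b\le t$, while $t<l_{b+1}=l_{(b-2)+3}\le r_{b-2}\le r_{b-1}\le r_b$; combining these shows $t\in I_{b-2}\cap I_{b-1}\cap I_b$, three consecutive intervals. Reading $b-2\bmod 4$ back into the labels $T_n^j$ reproduces precisely cases $(1)$–$(4)$: residue $1$ gives $T_n^1\cap T_n^2\cap T_n^3$, residue $2$ gives $T_n^2\cap T_n^3\cap T_n^4$, residue $3$ gives $T_n^3\cap T_n^4\cap T_{n+1}^1$, and residue $0$ gives $T_n^4\cap T_{n+1}^1\cap T_{n+1}^2$. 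The finitely many small $t<l_3$ are only of formal interest, since the spectral application concerns $t\to\infty$; I would dispatch them separately via the lowest indices.

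I expect the main obstacle to be purely organizational rather than analytic: keeping straight which of the four relations in \eqref{Arith1} closes which of the four potential gaps $l_{k+3}\le r_k$, and verifying honestly that the polynomial factors $n+1$ produced by the right endpoints $q_{n+1}^{(j)}/(n+1)$ are absorbed by the doubly-exponential separation of the denominators. No individual estimate is deep; the care lies entirely in the indexing of the chain and in the modulo-$4$ bookkeeping.
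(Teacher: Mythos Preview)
Your chain reformulation is correct in spirit and is a cleaner packaging of the paper's argument. The paper proceeds by a direct case split: it fixes $n$ with $t\in[e^{nq_n^{(1)}},e^{(n+1)q_{n+1}^{(1)}}]$ and then locates $t$ relative to the thresholds $q_n^{(2)}/n$, $q_n^{(3)}/n$, $q_n^{(4)}/n$, reading off the appropriate triple of $T_m^j$'s in each sub-case. Your single overlap condition $l_{k+3}\le r_k$, together with the monotonicity of both endpoint sequences, captures the same combinatorics uniformly, and the modulo-$4$ reading at the end reproduces exactly the four alternatives; what you gain is that the source of the number four (residues) and the depth three (overlap) becomes transparent, whereas the paper's enumeration is terse.

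One point to tighten: your claim that \eqref{Arith1} absorbs the factor $n{+}1$ is not literally justified. For $k\equiv 1\pmod 4$ you need $q_{n+1}^{(1)}\ge (n{+}1)\,e^{nq_n^{(4)}}$, but \eqref{Arith1} gives only $q_{n+1}^{(1)}\ge e^{nq_n^{(4)}}$, with the two sides exactly matched and no slack for the polynomial; the same occurs in the other residues. The paper's partition sidesteps this by using cut points $q_n^{(j)}/n$ rather than $e^{nq_n^{(j)}}$: the comparisons it actually needs are of the form $q_n^{(j)}/n\le q_{m+1}^{(i)}/(m{+}1)$ with $q_{m+1}^{(i)}$ exponentially larger than $q_n^{(j)}$, and there the polynomial factors are genuinely swallowed. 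In your framework the easy fix is either to imitate this choice of cut points, or simply to note that the construction in the proof of Lemma~\ref{Arithmetric} actually produces $q_{n+1}^{(1)}=a_{n+1}^{(1)}q_n^{(1)}+q_{n-1}^{(1)}>q_n^{(1)}\,e^{nq_n^{(4)}}$, which carries far more than the needed slack.
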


\begin{proof}[Proof of Lemma \ref{lemma.Tnj}]
Let $n$ be such that $t\in [e^{nq_n^{(1)}},e^{(n+1)q_{n+1}^{(1)}}]$. If 
$t\leq q_n^{(2)}/n$, then $t\in T_{n-1}^j$ for every $j=2,3,4$. If $t\in [q_n^{(2)}/n,q_n^{(3)}/n]$, then $t\in T_{n-1}^j$, for $j=3,4$ and $t\in T_n^{(1)}$.   If $t\in [q_n^{(3)}/n,q_n^{(4)}/n]$, then $t\in T_{n-1}^4$,  $t\in T_n^{(j)}$, for $j=1,2$.  
\end{proof}

\subsection{The choice of the ceiling function} \label{sec.ceiling} Following \cite{Y,Fanalytic}, let $\varphi$ be the following strictly positive real analytic function on $\mathbb{T}^{4}$
\begin{equation*}\label{Ceiling function}
 \varphi(x_{1},...,x_{4})=1+\sum_{j=1}^4\sum_{n\geq n_0} \frac{\cos(2\pi q^{(j)}_{n}x_{j})}{e^{q^{(j)}_{n}}},
\end{equation*}
where $n_0$ is chosen sufficiently large so that $\vphi$ is strictly positive. 

\subsection{The special flows}\label{sec.special} Now, for  $\a \in Y$ and the function $\varphi$, we denote by $\{T^t_{\a,\vphi}\}$ the special flow above the translation $R_\a$ on $\T^4$ and under the ceiling function  $\vphi$ (see Section \ref{sec.def} for the definitions). Our main result is the following 
\begin{theorem}\label{Theo1}
For any $\alpha\in Y$, the special flow constructed over the translation $R_{\alpha}$ on $\mathbb{T}^{4}$ and under the ceiling function $\varphi$ has countable Lebesgue spectrum.
\end{theorem}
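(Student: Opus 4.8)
The plan is to establish countable Lebesgue spectrum via the two-step strategy outlined in the introduction, following the scheme of \cite{FFK}: first prove that the maximal spectral type is absolutely continuous with respect to Lebesgue measure by obtaining square-summable decay of correlations for smooth coboundaries, and then upgrade this to the infinite Lebesgue spectrum property by constructing a family of observables with almost-orthogonal cyclic subspaces. I would begin by reducing the spectral statement to quantitative mixing estimates. For a smooth coboundary $f = g\circ T^1 - g$ (or more precisely a suitable smooth function of zero average written as a coboundary up to controlled error), the correlation $\langle f\circ T^t, f\rangle$ can be estimated by tracking how the flow stretches small transversal intervals. The key is that $\int_M f\circ T^t \cdot \bar f \, d\mu$ decays like the measure of the ``bad set'' where uniform stretch is weak, so controlling the bad set uniformly in $t$ yields the correlation decay.

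The heart of the argument is the geometric stretch estimate, and this is where Lemma~\ref{lemma.Tnj} and the quantities $r^t_I, S^t_I$ of \eqref{def.rt}--\eqref{def.st} enter decisively. First I would show, via a Denjoy--Koksma-type computation on the Birkhoff sums $\varphi'_{N(z,t)}$ and $\varphi''_{N(z,t)}$ of the specific $\varphi$ chosen in Section~\ref{sec.ceiling}, that for each fixed direction $j$ and each $t \in T^j_n$, the set of points in $M$ where the uniform stretch $S^t_I$ in the $j$-direction is smaller than $t^{1-2\beta}$ has measure bounded by roughly $t^{-\beta+\epsilon}$ (this is exactly Corollary~\ref{coro.good}, which I may assume). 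The decisive structural input is that, by Lemma~\ref{lemma.Tnj}, \emph{every} time $t\geq 0$ lies in at least three of the intervals $T^j_{n_j}$ simultaneously, so there are three independent directions of stretch available at every single time. Intersecting the three bad sets, one direction compensating the failure of another, gives a bad set of measure $t^{-3/4+3\epsilon} = o(t^{-1/2-\epsilon})$, while the stretch on the good set exceeds $t^{1/2+\epsilon}$. This is precisely the arithmetic of the introduction and it converts into a correlation bound $o(t^{-1/2-\epsilon})$, whence $\int_1^\infty |\langle f\circ T^t,f\rangle|^2\,dt < \infty$, giving absolute continuity of the maximal spectral type.

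For the second step I would invoke the abstract criterion of \cite{FFK} that deduces ILS from decay of correlations together with fast equidistribution of small transversal sets. Concretely, using the same stretching mechanism that equidistributes the images $T^t(I^j)$ of $j$-intervals (by unique ergodicity of $R_\a$, as sketched in the mixing discussion around Figure~\ref{FigFlowMix}), one builds for each scale an observable supported on a thin transversal set whose cyclic subspace under the flow is almost orthogonal to the others and whose spectral measure is close to a piece of Lebesgue measure. Letting the scale go to infinity produces countably many mutually almost-orthogonal cyclic subspaces each carrying a Lebesgue component, which combined with the absolute continuity from the first step yields countable Lebesgue spectrum.

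The main obstacle I anticipate is the first step, and specifically making the three-directional stretch estimate genuinely \emph{uniform in $t$} rather than only along a special sequence of times. The Denjoy--Koksma property reasserts itself at times near the denominators $q^{(j)}_n$, so on part of the phase space the Birkhoff sums $\varphi'_{N}$ can fail to grow, and the delicate point is to show that the three simultaneously active directions guaranteed by Lemma~\ref{lemma.Tnj} never all degenerate at once. Controlling the second-derivative sums $\varphi''_N$ relative to $(\varphi'_N)^2$ uniformly (so that $S^t_I$ really measures almost-linear expansion and not merely non-vanishing first derivative) for the precise lacunary $\varphi$, and verifying that the power-$\beta$ bad-set bound holds across the full window $T^j_n=[e^{nq^{(j)}_n}, q^{(j)}_{n+1}/(n+1)]$ and not just at its endpoints, is the technical core that the rest of the paper will presumably carry out in detail.
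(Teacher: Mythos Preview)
Your proposal is correct and follows essentially the same two-step route as the paper: square-summable decay of correlations for smooth coboundaries via the three simultaneous directions of stretch (Theorem~\ref{coro.abs} together with the partial partition of Proposition~\ref{propo.partition}) gives absolute continuity, and then the flow-box criterion adapted from \cite{FFK} (Theorems~\ref{theo.criterion} and~\ref{theo.cils}) upgrades this to countable Lebesgue spectrum. The one technical refinement you may not have anticipated is that the criterion must be formulated with direction-dependent $C^1$ norms $\Vert\cdot\Vert_{1,j}$ (see \eqref{eq.chi} and Proposition~\ref{propo.mix}), since the decay estimate on a $j$-interval involves only $\partial_{x_j}$ while the flow-box observables over thin multi-intervals $J_n$ have very different derivative growth across the four directions.
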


From Theorem \ref{Theo1} and the correspondence between special flows above translations and reparametrizations of translation flows on the torus, we derive the following result that gives a precise example of a flow as in Theorem \ref{Theo}.

\begin{corollary}\label{main}
 For any $\alpha \in Y$, there exists a strictly positive real analytic function $\Phi$ defined on $\mathbb{T}^{5}$, such that the reparametrization of the irrational flow $(\alpha,1)$ by $1/\Phi$ has countable Lebesgue spectrum. 
\end{corollary}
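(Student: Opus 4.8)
The plan is to derive Corollary~\ref{main} from Theorem~\ref{Theo1} using the standard isomorphism between special flows above a toral translation and reparametrizations of a linear flow on the torus of one higher dimension. First I would recall the classical correspondence: given a strictly positive ceiling function $\vphi$ on $\T^4$ and the translation $R_\a$, the special flow $T^t_{\a,\vphi}$ on the configuration space $M$ is measure-theoretically (indeed smoothly) isomorphic to the flow on $\T^5=\T^4\times\T$ obtained by reparametrizing the constant-speed linear flow in the direction $(\a,1)$. Concretely, the suspension picture identifies $M$ with $\T^5$ via a map that rectifies the roof $\vphi$ to a flat roof of height $1$, and under this identification the vertical unit-speed flow on the suspension becomes a time-change of the translation flow $(\a,1)$ on $\T^5$.

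Next I would make the reparametrization function explicit. The natural candidate is to set $\Phi$ so that the flow $(\a,1)$ reparametrized by $1/\Phi$ spends ``time $\vphi$'' crossing one period in the last coordinate; that is, one takes $\Phi(\theta_1,\dots,\theta_4,\theta_5)$ to be a function that, along each orbit, integrates $\vphi$ correctly. The key technical point is to check that this $\Phi$ is real analytic on $\T^5$ and strictly positive. Since $\vphi$ is real analytic and strictly positive on $\T^4$ by the construction in Section~\ref{sec.ceiling}, and the rectifying change of variables that straightens the graph of $\vphi$ is a real analytic diffeomorphism (the roof function being real analytic with values bounded away from $0$), the resulting reparametrization density $\Phi$ inherits real analyticity and positivity. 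I would verify that the reparametrized flow on $\T^5$ is conjugate, via this analytic diffeomorphism, to $T^t_{\a,\vphi}$, so that the two flows have the same spectral type.

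Having established the conjugacy, the spectral conclusion is immediate: spectral type is a conjugacy invariant, so if $T^t_{\a,\vphi}$ has countable Lebesgue spectrum for every $\a\in Y$ (Theorem~\ref{Theo1}), then so does the reparametrization of $(\a,1)$ by $1/\Phi$. I would also remark that minimality of the translation flow $(\a,1)$ follows from the hypothesis that $1,\a_1,\dots,\a_4$ are rationally independent, which holds for $\a\in Y$, and that this minimality passes to the reparametrized flow since reparametrization by a strictly positive analytic function preserves orbits.

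The main obstacle is the careful verification that the rectifying map producing $\Phi$ is genuinely real analytic and that the conjugacy is an isomorphism of flows rather than merely of the underlying measurable structures; one must confirm that straightening the analytic graph $\{(x,s):0\le s<\vphi(x)\}$ onto $\T^5$ yields an analytic and analytically invertible change of coordinates, and that under it the unit vertical flow corresponds exactly to the time-changed linear flow with the claimed density $\Phi$. Once that analytic bookkeeping is done, the transfer of the spectral statement is formal.
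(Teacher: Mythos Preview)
Your overall strategy---reducing Corollary~\ref{main} to Theorem~\ref{Theo1} via the correspondence between special flows and reparametrizations of linear toral flows, then invoking conjugacy invariance of spectral type---is exactly the paper's strategy. The gap is in the construction of the analytic $\Phi$. The ``rectifying change of variables that straightens the graph of $\vphi$'' is only a homeomorphism, not an analytic (or even $C^1$) diffeomorphism: along the seam where $(x,\vphi(x))$ is glued to $(x+\a,0)$, the vertical derivative of the map $(x,s)\mapsto(x,s/\vphi(x))$ equals $1/\vphi(x)$ from below and $1/\vphi(x+\a)$ from above, and these disagree unless $\vphi$ is $R_\a$-invariant. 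Equivalently, the local expression $\Phi(y,v)=\vphi(y-v\a)$ that the straightening produces is not periodic in $v$, so it does not define a function on $\T^5$. Thus the claim that ``$\Phi$ inherits real analyticity'' from the straightening is precisely the step that fails.

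What one must actually do is solve the integral equation
\[
\int_0^1 \Phi(x+s\a,\,s)\,ds=\vphi(x)
\]
for an analytic $\Phi$ on $\T^5$; then the reparametrized flow has global section $\{x_5=0\}$ with first return map $R_\a$ and return time $\vphi$, hence is flow-isomorphic to $T^t_{\a,\vphi}$. The paper (following \cite[Proposition~6]{Fanalytic}) does this explicitly on the Fourier side: for each mode $e^{2\pi i q_n^{(j)}x_j}$ of $\vphi$ it selects the integer $l_n^{(j)}$ nearest to $-q_n^{(j)}\a_j$ and chooses the coefficient $d_n^{(j)}$ of the mode $e^{2\pi i(q_n^{(j)}x_j+l_n^{(j)}x_5)}$ of $\Phi$ so that the integral identity holds term by term. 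Because $\vert q_n^{(j)}\a_j+l_n^{(j)}\vert\le 1/2$, the factors relating $d_n^{(j)}$ to $e^{-q_n^{(j)}}$ are bounded, so $\Phi$ inherits the exponential decay and is real analytic on $\T^5$; strict positivity follows since $\Phi$ is close to $1$ for $n_0$ large. This explicit Fourier construction is the missing ingredient in your argument.
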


\begin{proof} We sketch the proof and refer to \cite[Proposition 6]{Fanalytic} for the details. Define 
$$\Phi(x_1,\ldots,x_5)=  1+ \Re\left(\sum_{n=n_0}^{\infty} d^{(j)}_n e^{i2\pi (q_{n}^{(j)}x_{j}+l_n^{(j)} x_{5})}\right)$$ 
where we choose $l_{n}^{(j)}$ to be the closest relative integer to $-q_{n}^{(j)}\alpha_j$, and $$d^{(j)}_n=\frac{i2\pi (q_n^{(j)}\a_j+l_n^{(j)})}{e^{i2\pi (q_n^{(j)}\a_j+l_n^{(j)})}-1}e^{-q_n^{(j)}}.$$
Then, if $n_0$ is sufficiently large, $\Phi$ is a real analytic strictly positive function on $\T^5$, that satisfies 
$$\vphi(x_1,\ldots,x_4)=\int_0^1 \Phi(x_1+s\a_1,\ldots,x_4+s\a_4,s)ds.$$
Hence the reparametrization of the flow of frequency $(\a,1)$ with the function $1/\Phi$ can be viewed as the special flow above $R_\a$ on $\T^4$ with the ceiling function $\vphi$. Hence the function $\Phi$ satisfies the conclusion of Corollary \ref{main}.\end{proof}

\subsection{Square summable decay for smooth coboundaries}

We recall that $f$ is called a smooth coboundary over the flow $T^t_{\a,\vphi}$ if there exists a smooth function 
$\phi$ such that, for any $a<b$, 
$$\int_a^b f(T^t(x_0,t_0)) dt = \int_a^b f(x_0,t_0+t) dt = \phi(x_0,t_0+b)-\phi(x_0,t_0+a).$$
The function $\phi$ is called the {\it transfer function} of $f$. The space of smooth coboundaries is dense in the subspace $L^2_0(M,\mu) \subset L^2(M,\mu)$ of zero average functions, provided $T^t_{\a,\vphi}$ is ergodic (which is always the case if $\a$ is irrational). The set of smooth functions that are supported inside $M_\zeta$ for some $\zeta>0$ is also dense
in the subspace $L^2_0(M,\mu) \subset L^2(M,\mu)$ of zero average functions.

Hence to prove that the maximal spectral type of  $T^t_{\a,\vphi}$ is absolutely continuous with respect to Lebesgue measure on $\R$, it suffices to show the following 
 \begin{theorem} \label{coro.abs} For every $\zeta>0$, for every smooth coboundary function $f$ and every smooth function $g$ supported in $M_\zeta$, there exists a constant $C(\a,\vphi,\zeta)>0$ such that, for every $t$, it holds that 
 \begin{equation*}
    \left\lvert \int_{M}f(T^{t}_{\alpha,\varphi}z)g(z)d\mu\right\rvert\leq  C\None(f,g)t^{-1/2-\eps}.
\end{equation*}
\end{theorem}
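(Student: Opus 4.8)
The plan is to bound the correlation
$$\left\lvert \int_M f(T^t_{\a,\vphi}z)g(z)\,d\mu\right\rvert$$
by exploiting that $f$ is a smooth coboundary. Writing $f = \frac{d}{dt}\big|_{t=0}\phi\circ T^t$ in the integrated sense supplied by the definition of the transfer function, I would first integrate by parts in the flow direction so that the correlation becomes an integral of $\phi\circ T^t$ against the \emph{derivative} of $g$ along the flow, picking up a factor that reflects how the flow stretches. The whole point of passing to coboundaries, as emphasized after Theorem \ref{coro.abs}, is that every unit of uniform stretch $S^t_I$ is converted into one extra unit of decay: an interval $I$ on which $S^t_I$ is large is mapped by $T^t$ to an almost affine image with derivative of size comparable to $\varphi'_{N}$, and integrating a smooth function against such a sheared image produces cancellation of order $(S^t_I)^{-1/2}$ (one power of $1/2$ from the almost-linear expansion, analogous to stationary phase / van der Corput). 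So on a good interval the local contribution is $O\big((S^t_I)^{-1/2}\big)$.

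\medskip
\noindent\emph{Reduction to a stretch estimate.} The strategy is to foliate $M_\zeta$ (where $g$ lives, so $s$ stays bounded away from $\vphi$) by short intervals $I^j$ in each of the coordinate directions $j\in\jjj$, and to classify them into \emph{good} intervals on which $S^t_{I^j}$ is large and \emph{bad} intervals where stretch is weak. On good intervals I would use the cancellation just described; on bad intervals I would use only the trivial bound $\None(f,g)$ times the measure of the bad set. The key quantitative input is Corollary \ref{coro.good} (referenced in the introduction): at time $t$, in a fixed direction of uniform stretch, the set where the stretch is weaker than $t^{1-2\beta}$ has measure $\lesssim t^{-\beta+\eps}$. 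Combining the good and bad contributions in a single direction would only give decay like $t^{-1/4+\eps}$ away from a set of measure $t^{-1/4+\eps}$, which is exactly the obstruction noted in the introduction and is \emph{not} square summable.

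\medskip
\noindent\emph{Using three directions.} The decisive step, and the reason the construction is carried out on $\T^4$ rather than $\T^2$, is Lemma \ref{lemma.Tnj}: for every $t\ge 0$ there are \emph{three} indices $j$ for which $t\in T^j_{n_j}$, i.e.\ three simultaneous directions in which the Birkhoff sums of $\vphi$ stretch. I would therefore apply the single-direction estimate in each of the three available directions independently. Because the arithmetic conditions \eqref{Arith1} make the three relevant frequencies $q^{(j)}_{n_j}$ enormously separated in scale, the corresponding stretches act in essentially independent directions and the bad sets in the three directions can be intersected: the set on which stretch is weak in \emph{all three} directions has measure bounded by the product, $\big(t^{-\beta+\eps}\big)^3 = t^{-3\beta+3\eps}$. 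Choosing $\beta$ so that $t^{1-2\beta}$ gives the target pointwise decay $t^{1/2+\eps}$ on the triply-good set, while $t^{-3\beta+3\eps}=o(t^{-1/2-\eps})$ controls the triply-bad set, yields the overall bound $C\None(f,g)\,t^{-1/2-\eps}$. A Fubini argument over the bounded $s$-variable (and over the transverse directions not being stretched) then assembles the one-dimensional estimates into the full correlation bound.

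\medskip
\noindent\emph{Main obstacle.} I expect the hardest part to be making the good-interval cancellation estimate rigorous and \emph{uniform} in $t$, together with the geometric claim that the three stretching directions are genuinely transverse so that the three bad sets may be multiplied. Controlling $S^t_{I^j}$ requires estimating $\varphi'_N$ and $\varphi''_N$ along orbits, i.e.\ the derivatives of the Birkhoff sums $S_N\vphi$; because the Denjoy--Koksma cancellation can still occur for the special times near multiples of the $q^{(j)}_n$, one must verify that at any given $t$ each of the three chosen directions really lies in its stretching window $T^j_{n_j}$ and produces a lower bound on $S^t_{I^j}$ of the required polynomial size on all but a small set. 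Tracking the $\eps$'s so that the exponent is strictly better than $-1/2$, and ensuring the constant $C(\a,\vphi,\zeta)$ does not blow up through the coboundary transfer function $\phi$ and the factor $1/(\vphi-s)\le 1/\zeta$ coming from $M_\zeta$, is where the bulk of the technical work will reside.
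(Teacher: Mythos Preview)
Your overall architecture matches the paper's proof exactly: Proposition~\ref{propo.partition} builds the partial partition $\cG_t$ of $M_\zeta$ into good $j$-intervals using the three directions supplied by Lemma~\ref{lemma.Tnj}; Corollary~\ref{coro.good} gives the decay on each good interval; the triply-bad set has measure $O(t^{-3/4+3\eps})=o(t^{-1/2-\eps})$; and Fubini assembles the pieces. So the strategy is right.

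There is, however, a genuine error in your quantitative mechanism. The local decay on a good interval is \emph{not} $(S^t_I)^{-1/2}$; it is $\lambda(I)\big(1/S^t_I + 1/r^t_I\big)$, a full inverse power (this is Proposition~\ref{propo.mix}). Your ``stationary phase / van der Corput'' heuristic gives the square-root rate for \emph{general} smooth observables, but the entire point of the coboundary hypothesis---the ``extra unit of decay'' you correctly name and then do not use---is that $f(T^t z)=\partial_s\phi(T^t z)$ can be rewritten as $(\varphi'_N)^{-1}\partial_j[\phi(T^t z)]$ plus lower order, and a genuine integration by parts in the $j$-variable (not the flow variable) produces $1/\varphi'_N=1/r^t_I$. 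This distinction is not cosmetic: since $S^t_I\le C t$ always (Corollary~\ref{cor.stretch} with $\theta\le 1$), the rate $(S^t_I)^{-1/2}$ can never beat $t^{-1/2}$, and your numerology in the third paragraph would force $\beta<0$. By contrast, with $\theta=t^{-1/4+\eps}$ one has $S^t_I\gtrsim t^{1/2+\eps}$ and $r^t_I\gtrsim t^{3/4+\eps}$, so $1/S^t_I\lesssim t^{-1/2-\eps}$ closes the estimate. Once you replace $(S^t_I)^{-1/2}$ by $1/S^t_I$ throughout, your outline becomes the paper's proof.
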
 
The symbol $\None(f,g)$ will denote positive constant depending on the $C^0$ and the $C^1$ norm of the functions $f$ and $g$ (see Section 5 for the definitions).

\subsection{Lebesgue spectrum with infinite multiplicity} 
To show that the spectral type is Lebesgue and has infinite multiplicity we apply a general criterion, based on correlation decay, similar to the one used in \cite{FFK}. We state a version of the criterion adapted to our context in Section \ref{sec.CILS} and then we show that it holds for the flow 
$T^t_{\a,\vphi}$ defined in Section \ref{sec.special}.

\section{Birkhoff sums and their uniform stretch on good intervals}
 
First of all we state a standard result on the uniform behavior of $N(x,t)$ due to the unique ergodicity of $R_\a$ and continuity of $\vphi$.  
\begin{lemma} \label{lemma.uniqueergodic}
For any $t$ sufficiently large, for any $x\in \mathbb{T}^{4}$
\begin{equation*}
    N(x,t)\in[\frac{t}{2},2t].
\end{equation*}
\end{lemma}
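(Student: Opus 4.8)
The plan is to control $N(x,t)$, the number of times the orbit crosses the base $\T^4$ before total flow-time $t$ is exhausted, by comparing the Birkhoff sum $S_N\vphi(x)$ to $N$ times the average value of $\vphi$. Since $\vphi$ is a strictly positive real analytic function on the compact torus $\T^4$, it is bounded: there exist constants $0<m\le \vphi(y)\le M<\infty$ for all $y\in\T^4$. By the defining inequality \eqref{D-C} for $N(x,s,t)$ (taken here at base height $s=0$, so that $N(x,t):=N(x,0,t)$), we have
\begin{equation*}
S_{N(x,t)}\vphi(x)\le t\le S_{N(x,t)+1}\vphi(x),
\end{equation*}
and the elementary two-sided bound $m\,n\le S_n\vphi(x)\le M\,n$ gives $t/M-1\le N(x,t)\le t/m$. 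This already shows $N(x,t)$ grows linearly in $t$, but the crude constants $m,M$ need not lie in $[1/2,2]$, so a naive bound is not yet good enough.

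First I would invoke unique ergodicity of the minimal translation $R_\a$ together with the continuity of $\vphi$ to sharpen the estimate on $S_n\vphi(x)$. Because $\a\in Y$ is irrational with $1,\a_1,\dots,\a_4$ rationally independent, $R_\a$ is uniquely ergodic, and the average of $\vphi$ over $\T^4$ equals $\int_{\T^4}\vphi\,d\lambda_4=1$ (the only nonzero Fourier mode is the constant term, since every cosine in the series for $\vphi$ has a nonzero frequency). The key input is the uniform convergence of Birkhoff averages guaranteed by unique ergodicity of a continuous function: for every $\eps'>0$ there is $n_0'$ such that for all $n\ge n_0'$ and all $x\in\T^4$,
\begin{equation*}
\Bigl\lvert \tfrac{1}{n}S_n\vphi(x)-1\Bigr\rvert<\eps'.
\end{equation*}
Thus $(1-\eps')n\le S_n\vphi(x)\le(1+\eps')n$ uniformly in $x$ once $n$ is large.

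Next I would turn these uniform Birkhoff bounds back into bounds on $N(x,t)$. From $S_{N}\vphi(x)\le t$ with $N=N(x,t)$, and using $S_N\vphi(x)\ge(1-\eps')N$ (valid once $N$ is large, which holds uniformly for $t$ large by the crude linear growth already established), I get $N(x,t)\le t/(1-\eps')$. Likewise from $t\le S_{N+1}\vphi(x)\le(1+\eps')(N+1)$ I get $N(x,t)\ge t/(1+\eps')-1$. Choosing $\eps'$ small enough, say $\eps'=1/3$, and taking $t$ sufficiently large so that the additive constant $-1$ is absorbed, yields $t/2\le N(x,t)\le 2t$ uniformly in $x\in\T^4$, which is precisely the claim.

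The main obstacle, though it is a mild one, is the bookkeeping needed to ensure uniformity in $x$ at every stage: the threshold $n_0'$ from unique ergodicity must be reached by $N(x,t)$ for every $x$ simultaneously, which is why one first needs the crude linear lower bound $N(x,t)\ge t/M-1$ to guarantee that $N(x,t)\to\infty$ uniformly as $t\to\infty$. Once that uniform divergence is in hand, the uniform Birkhoff estimate applies to all orbits at once and the sandwiching argument closes. No Diophantine or alternating-denominator properties of $\a\in Y$ are needed here beyond irrationality; the lemma is genuinely a soft consequence of unique ergodicity and continuity, as the surrounding text indicates.
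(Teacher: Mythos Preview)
Your proof is correct and follows essentially the same route as the paper: use the defining inequality for $N(x,t)$ to see that $N(x,t)\to\infty$ uniformly, then invoke the uniform convergence of Birkhoff averages (unique ergodicity plus continuity of $\vphi$, with $\int\vphi=1$) to sandwich $N(x,t)$ between $t/2$ and $2t$. The paper's proof is simply a terser version of your argument, omitting the explicit crude linear bound and the $\eps'$ bookkeeping that you spell out.
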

\begin{proof} By the definition of $N(x,t)$, $$0\leq t-\varphi_{N(x,t)}(x)\leq \varphi(R^{N(x,t)}_{\alpha}x)\leq \|\varphi\|.$$ This shows that $N(x,t) \to \infty$ uniformly as $t\to \infty$. Since by unique ergodicity of $R_\a$ we have that $\varphi_{N(x,t)}/N(x,t)\to \int_{\T^4} \varphi(x)dx=1$, we get the bounds of the lemma.
\end{proof}

 For $n\in \N$ and $\theta>0$ we define for every $j\in \{1,\ldots,4\}$ 

$$\cW(n,\theta,j)=\left\{x \in \mathbb{T}^{1} / \{q_{n}^{(j)}x\}\in [\theta,\frac{1}{2}-\theta]\cup[\frac{1}{2}+\theta,1-\theta]\right\}$$

\begin{definition}[Good intervals]  We say that a $j$-interval $I^j \subset M$ is $(n,\theta,j)$-good if $\pi_j(I^j) \bigcap \cW(n,\theta,j) =\emptyset$.
 \end{definition}

Recall the definition of  $T_{n}^{j}=[e^{nq_{n}^{(j)}},\frac{q_{n+1}^{(j)}}{n+1}]$ given in \eqref{eq.Tnj}. Let
$${\bar T}_{n}^{j}:=[e^{nq_{n}^{(j)}}/10,10\frac{q_{n+1}^{(j)}}{n+1}], \quad j=1,...,4.$$
 
 The following estimate on uniform stretch is derived from a similar one in \cite{Fanalytic} where it plays the key role for mixing of the reparametrized flows on $\T^3$
  
 \begin{proposition} \label{propo.us} 
If $I^j$ is a $j$-interval that is $(n,\theta,j)$-good, then for any $m\in \bar{T}^{j}_{n}$ the following holds 
\begin{enumerate}
\item  $\underset{x\in I^j}{\inf}\mid\frac{\partial\varphi_{m}}{\partial{x}}\mid\geq \frac{\theta mq_{n}^{(j)}}{e^{q_{n}^{(j)}}},$
\item $\underset{x\in I^j}{\sup}\mid\frac{\partial^{2}\varphi_{m}}{\partial{x^{2}}}\mid \leq Cm.$
\end{enumerate}
\end{proposition}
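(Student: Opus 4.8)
The plan is to compute the Birkhoff sums $\varphi_m$ explicitly and isolate the dominant term. Since $\varphi(x_1,\ldots,x_4)=1+\sum_{k=1}^4\sum_{p\geq n_0}\frac{\cos(2\pi q_p^{(k)}x_k)}{e^{q_p^{(k)}}}$ is a sum of one-variable functions, the partial derivative $\frac{\partial\varphi_m}{\partial x_j}$ depends only on the $j$-th block. Differentiating and summing over the orbit of $R_\a$, I would write
\begin{equation*}
\frac{\partial\varphi_m}{\partial x_j}(x)=-2\pi\sum_{p\geq n_0}\frac{q_p^{(j)}}{e^{q_p^{(j)}}}\sum_{i=0}^{m-1}\sin\big(2\pi q_p^{(j)}(x_j+i\alpha_j)\big).
\end{equation*}
The inner geometric-type sum is controlled by the distance from $q_p^{(j)}\alpha_j$ to the nearest integer, i.e.\ by $\|q_p^{(j)}\alpha_j\|$, which by the theory of continued fractions (using \eqref{eq.conv}) is of order $1/q_{p+1}^{(j)}$. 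The key separation of scales is that for $m\in\bar T_n^{(j)}$, the term $p=n$ is the resonant one: $m$ lies roughly between $q_n^{(j)}$ and $q_{n+1}^{(j)}$, so $m\,\|q_n^{(j)}\alpha_j\|$ is small and the $p=n$ sum behaves almost like $m$ copies aligned coherently, giving a contribution of order $m q_n^{(j)}/e^{q_n^{(j)}}$, while all other $p$ either have $q_p^{(j)}$ so large that the prefactor $e^{-q_p^{(j)}}$ kills them, or $q_p^{(j)}$ so small that $\|q_p^{(j)}\alpha_j\|$ is tiny but the number of terms $m$ is not large enough to accumulate. The goodness hypothesis $\pi_j(I^j)\cap\cW(n,\theta,j)=\emptyset$ is exactly what forces $\{q_n^{(j)}x_j\}$ to stay away from $0$ and $1/2$, so that $|\sin(2\pi q_n^{(j)}x_j)|\gtrsim\theta$ and the coherent sum does not cancel; this produces the lower bound $\theta m q_n^{(j)}/e^{q_n^{(j)}}$ in item (1).

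For the coherence estimate I would use that on $\bar T_n^{(j)}$ we have $m\leq 10\,q_{n+1}^{(j)}/(n+1)$, so $m\|q_n^{(j)}\alpha_j\|\leq m/q_{n+1}^{(j)}\lesssim 1/(n+1)$ is small, whence the $m$ summands $\sin(2\pi q_n^{(j)}(x_j+i\alpha_j))$ stay within a small arc of $\sin(2\pi q_n^{(j)}x_j)$ and their sum is at least a constant multiple of $m\,|\sin(2\pi q_n^{(j)}x_j)|\geq c\theta m$. For the second derivative in item (2), differentiating twice brings down a factor $q_p^{(j)}$ squared, but now the worst case is a trivial bound: each of the $m$ terms in the orbit sum has size at most $4\pi^2 (q_p^{(j)})^2/e^{q_p^{(j)}}$, and summing the rapidly convergent series $\sum_p (q_p^{(j)})^2 e^{-q_p^{(j)}}$ over all $p$ and all four directions gives an absolute constant, so $\big|\frac{\partial^2\varphi_m}{\partial x^2}\big|\leq Cm$ with $C$ depending only on $\varphi$.

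The main obstacle is the lower bound in item (1): I must show that the resonant $p=n$ term genuinely dominates the entire remaining series. This requires two quantitative controls working together. First, the tail $p>n$ must be negligible despite the large frequencies, which follows from $e^{-q_p^{(j)}}$ decaying faster than any polynomial in $q_p^{(j)}$ can grow under the gap condition \eqref{Arith1}; here the exponential separation $q_{n+1}^{(j)}\geq e^{n q_n^{(j)}}$ is essential. Second, the low-frequency part $p<n$ must also be dominated: although those terms have large prefactors (small $q_p^{(j)}$ means $e^{-q_p^{(j)}}$ is not small), the corresponding inner sums $\sum_{i<m}\sin(\cdots)$ can be large only if $m\|q_p^{(j)}\alpha_j\|$ is comparable to the period, but for $p<n$ one has $\|q_p^{(j)}\alpha_j\|\sim 1/q_{p+1}^{(j)}\geq 1/q_n^{(j)}$ and $m\geq e^{n q_n^{(j)}}/10$, so these sums oscillate and average out, contributing at most $O(1/\|q_p^{(j)}\alpha_j\|)=O(q_{p+1}^{(j)})$ per term, which is then dwarfed by $m q_n^{(j)}/e^{q_n^{(j)}}$. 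Assembling these three regimes carefully, and checking that the comparison constants absorb into $\theta$ and $C$, is the technical heart of the argument; the rest is routine differentiation and summation.
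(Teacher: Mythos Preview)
Your proposal is correct and follows essentially the same approach as the paper. The paper packages the inner orbit sums into the complex quantity $X_j(m,p)=\frac{1-e^{i2\pi m q_p^{(j)}\alpha_j}}{1-e^{i2\pi q_p^{(j)}\alpha_j}}$, records the three bounds $|X_j(m,p)|\leq m$, $|X_j(m,p)|\lesssim q_{p+1}^{(j)}$, and $|X_j(m,n)|\geq 2m/\pi$ for $m\leq q_{n+1}^{(j)}/2$ as a lemma (citing \cite{Fanalytic}), and then the proposition is immediate; your real-variable argument with the sine sums is the same computation unpacked, with the same three regimes $p<n$, $p=n$, $p>n$ handled exactly as you describe.
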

\begin{proof}

For any $m\in \mathbb{N}$, we have 
\begin{align*}
\varphi_{m}(x)=m+\text{Re}\bigr( \sum_{j=1}^{4} \sum_{n=1}^{\infty} \frac{X_{j}(m,n)}{e^{q_n^{(j)}}}e^{i2\pi q_n^{(j)} x_{j}}\bigl),
\end{align*}
where
\begin{equation*}
     X_{j}(m,n)=\frac{1-e^{i2\pi m q_n^{(j)}\alpha_{j}}}{1-e^{i2\pi q_n^{(j)}\alpha_{j}}},
\end{equation*} The proof of Proposition \ref{propo.us} is straightforward from the following lemma and we refer the reader to \cite{Fanalytic} for  the details.
\begin{lemma}[\cite{Fanalytic}] \label{Lem}
We have the following inequalities:
\begin{itemize}
    \item For all $m\in \mathbb{N}$, $\mid X_{j}(m,n)\mid \leq m.$
    \item For all $m< q_{n}^{(j)}$,$m\in \mathbb{N}$,$\mid X_{j}(m,n)\mid \leq q_{n}^{(j)}.$
    \item For any $m\leq \frac{q_{n+1}^{(j)}}{2}$, $\mid X_{j}(m,n)\mid \geq \frac{2m}{\pi}.$
\end{itemize}
\end{lemma}
\begin{proof}[Proof of Lemma \ref{Lem}.]  The proof of the first inequality is obvious. The others follow from the fact that  
\begin{equation*}
    \mid X_{j}(m,n)\mid =\left|\frac{\sin(\pi  m q_n^{(j)} \alpha_{j})}{\sin(\pi  q_n^{(j)} \alpha_{j})}\right|,
\end{equation*}
and the fact that by the definition of denominators of best approximations, $q_{n}^{(j)}$ satisfies
\begin{equation*}
    \lVert q_{n-1}^{(j)}\alpha_{j}\rVert< \lVert l\alpha_{j}\rVert, \quad \mbox{  $\forall l<q_{n}^{(j)}, l\neq q_{n-1}^{(j)}$},
\end{equation*}
as well as \begin{equation*}
   \frac{1}{q_{n}^{(j)}+q_{n+1}^{(j)}}\leq \lVert q_{n}^{(j)}\alpha_{j}\rVert<\frac{1}{q_{n+1}^{(j)}}.
\end{equation*}
\end{proof}
\end{proof}
 
 As an immediate corollary of Proposition \ref{propo.us}  and the definitions \eqref{def.rt} and \eqref{def.st} and Lemma \ref{lemma.uniqueergodic},  we get that 
\begin{corollary} \label{cor.stretch}
If $I^j$ is a $j$-interval that is $(n,\theta,j)$-good, then for any $t\in T^{j}_{n}$ the following holds 
\begin{enumerate}
\item  $r_{I^j}^t \geq \theta t^{1-\epsilon/100}$ 
\item $S_{I^j}^t \geq \theta^2 t^{1-\epsilon/50}$
\end{enumerate}
\end{corollary}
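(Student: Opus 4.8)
The plan is to read off both estimates by feeding the pointwise bounds of Proposition \ref{propo.us} into the definitions \eqref{def.rt} and \eqref{def.st}; the only genuine content is a range check for $N$ together with an elementary absorption of the exponential factor $e^{q_n^{(j)}}$ into a small power of $t$.

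First I would fix $t\in T_n^j$ (which forces $t$ large, so Lemma \ref{lemma.uniqueergodic} applies, enlarging $n_0$ if necessary) and observe that for every $z\in I^j$ the integer $m:=N(\bar z,t)$ governing the Birkhoff sum satisfies $m\in[t/2,2t]$. The whole purpose of the definition $\bar T_n^j=[e^{nq_n^{(j)}}/10,\,10\,q_{n+1}^{(j)}/(n+1)]$ is to contain this range: since $t\geq e^{nq_n^{(j)}}$ we get $t/2\geq e^{nq_n^{(j)}}/10$, and since $t\leq q_{n+1}^{(j)}/(n+1)$ we get $2t\leq 10\,q_{n+1}^{(j)}/(n+1)$, hence $m\in\bar T_n^j$. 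As $I^j$ is $(n,\theta,j)$-good, its $j$-coordinates avoid $\cW(n,\theta,j)$, so Proposition \ref{propo.us} applies with this $m$ and gives, writing $\varphi'$ for the derivative along $x_j$, both $|\varphi'_m(\bar z)|\geq \theta m q_n^{(j)}/e^{q_n^{(j)}}$ and $|\varphi''_m(\bar z)|\leq Cm$ for every $z\in I^j$.

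For statement (1) I would take the infimum over $z\in I^j$ in \eqref{def.rt} and use $m\geq t/2$ to get $r^t_{I^j}\geq \theta\,t\,q_n^{(j)}/(2e^{q_n^{(j)}})$. It then suffices to check $t\,q_n^{(j)}/(2e^{q_n^{(j)}})\geq t^{1-\epsilon/100}$, i.e. $t^{\epsilon/100}\geq 2e^{q_n^{(j)}}/q_n^{(j)}$. This is the one spot where the shape of $T_n^j$ is used: the lower endpoint $t\geq e^{nq_n^{(j)}}$ gives $t^{\epsilon/100}\geq e^{n\epsilon q_n^{(j)}/100}$, which dominates $2e^{q_n^{(j)}}/q_n^{(j)}$ once $n\epsilon/100>1$, i.e. for all $n$ large. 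For statement (2) I would combine both parts of Proposition \ref{propo.us}: taking the infimum in \eqref{def.st}, bounding the numerator below by the square of the part (1) estimate and the denominator above by $Cm$ yields $S^t_{I^j}\geq \theta^2 m (q_n^{(j)})^2/(Ce^{2q_n^{(j)}})\geq \theta^2 t (q_n^{(j)})^2/(2Ce^{2q_n^{(j)}})$, and the same bound $t\geq e^{nq_n^{(j)}}$ absorbs the factor $(q_n^{(j)})^2/(2Ce^{2q_n^{(j)}})$ into $t^{-\epsilon/50}$ for $n$ large.

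I do not expect a serious obstacle: this is precisely the advertised \emph{immediate corollary}. The two mild points to handle with care are (i) confirming that the range $[t/2,2t]$ of $N(\bar z,t)$ really sits inside $\bar T_n^j$ for $t\in T_n^j$, which is the sole reason $\bar T_n^j$ carries the factor $10$; and (ii) the sign of $\varphi''_m$ in \eqref{def.st}: since Proposition \ref{propo.us} controls only $|\varphi''_m|$, the denominator must be read in absolute value (as the uniform-stretch interpretation dictates), after which the quotient is bounded below exactly as above. The deliberately wasteful exponents $\epsilon/100$ and $\epsilon/50$ leave ample room to swallow the polynomial prefactors $q_n^{(j)}$, $1/q_n^{(j)}$ and the constant $C$ for all sufficiently large $n$.
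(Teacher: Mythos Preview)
Your proposal is correct and is exactly the argument the paper has in mind: the paper itself gives no proof beyond declaring the result ``an immediate corollary of Proposition \ref{propo.us} and the definitions \eqref{def.rt} and \eqref{def.st} and Lemma \ref{lemma.uniqueergodic},'' and you have simply spelled out those three ingredients --- the range check $N(z,t)\in[t/2,2t]\subset\bar T_n^j$, the pointwise bounds from Proposition \ref{propo.us}, and the absorption of $e^{q_n^{(j)}}$ into $t^{\epsilon/100}$ via $t\geq e^{nq_n^{(j)}}$. Your remarks on the factor $10$ in $\bar T_n^j$ and on reading $\varphi''$ in absolute value in \eqref{def.st} are both apt.
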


\section{From uniform stretch to absolutely continuous spectrum. Proof of Theorem \ref{coro.abs}}

\subsection{Decay of correlations on good intervals due to uniform stretch}

In all this section, we consider a pair of function $(f,g)$ such that : $f$ is a smooth coboundary with transfer function $\psi$;  $g \in {C^1}(M)$ and is supported inside $M_\zeta$. We define for every $j\in \{1,\ldots,4\}$
\begin{align}
\label{eq:Norms}
\Nzero(f,g)&:= \Vert \psi \Vert_0 \Vert  g \Vert_0  \quad \text{and} \\ \None(f,g,j)&:= 
(\Vert f \Vert_0 +\Vert \psi \Vert_0) \Vert g \Vert_{1,j} +  (\Vert f \Vert_{1,j} +\Vert \psi \Vert_{1,j}) \Vert g \Vert_0\,,
\end{align}
where $\Vert \cdot \Vert_0$ denotes the $C^0$ norm and $\Vert \psi \Vert_{1,j}=\|\psi\|_0+\|\partial_{x_j}f\|_0$. 
We also define $\None(f,g)=\max_{j\in \jjj} \None(f,g,j)$.

In \cite{Fanalytic} the uniform stretch estimates above good intervals as in Corollary \ref{cor.stretch} (with $\theta$ bounded from below) are used to prove equi-distribution of good intervals and then mixing by a Fubini argument. For the proof of countable Lebesgue spectrum, we look for square summable estimates on the decay of correlations, and this is why we use observables that are coboundaries.  We refer to \cite[Proposition 6.2]{FFK} for the proof of the following proposition that relates uniform stretching of the Birkhoff sums and the decay of correlations.

\begin{proposition} \label{propo.mix} 

For any $j$-interval $I^j\in M$ with extremities $(a,s)$ and $(b,s)$ and for any  $t\in T_n^{j}$, we have
\begin{equation*}
    \left\lvert \int_{I^j}f(T^{t}_{\alpha,\varphi}z)g(z)d\lambda(z)- \Delta(I^j)\right\rvert\leq C \left(\Nzero(f,g)\frac{\lambda(I^j)}{S^{t}_{I^j}}+\None(f,g,j)\frac{\lambda(I^j)}{r^{t}_{I^j}}\right),
\end{equation*}
where $\Delta(I^j)=\frac{g(a,s)\psi(T^{t}(a,s))}{\partial_j\varphi_{N(a,s)}(a)}-\frac{g(b,s)\psi(T^{t}(b,s))}{\partial_j\varphi_{N(b,s)}(b)}$.
\end{proposition}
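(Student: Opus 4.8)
\textbf{Proof strategy for Proposition \ref{propo.mix}.}

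The plan is to reduce the correlation integral over the $j$-interval $I^j$ to an integral along a single flow orbit, where the coboundary structure of $f$ can be exploited, and then to control the error by the uniform stretch quantities $S^t_{I^j}$ and $r^t_{I^j}$. First I would parametrize $I^j$ by its $j$-coordinate $x_j \in [a,b]$ at fixed height $s$, and observe from \eqref{eq:refspec} that along $I^j$ the flow $T^t$ displaces the base point by $N(z,t)\a$ and the fiber coordinate by $t+s - S_{N(z,t)}\vphi(\bar z)$. Since $f$ is a coboundary with transfer function $\psi$, the key idea is that integrating $f(T^t z)$ against $g(z)$ in the direction of expansion converts, after an integration by parts, into boundary terms involving $\psi$ evaluated at the endpoints $T^t(a,s)$ and $T^t(b,s)$, which is precisely the origin of the main term $\Delta(I^j)$.

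The central mechanism is a change of variables on $I^j$ that replaces the variable $x_j$ by the fiber coordinate of the image point. Concretely, as $x_j$ ranges over $[a,b]$, the fiber displacement $s - S_{N(z,t)}\vphi(\bar z)$ varies with derivative governed by $\partial_j \vphi_{N}(\bar z)$, whose infimum is exactly $r^t_{I^j}$ by \eqref{def.rt}. When $r^t_{I^j}$ is large, this map is a near-diffeomorphism onto its image, and the integral $\int_{I^j} f(T^t z)\, g(z)\, d\lambda$ becomes, to leading order, an integral of $f$ along a genuine flow segment. Here the coboundary identity
\begin{equation*}
\int_a^b f\l(x_0, t_0+\tau\r)\, d\tau = \psi(x_0,t_0+b)-\psi(x_0,t_0+a)
\end{equation*}
is applied to turn the one-dimensional flow integral into the endpoint difference appearing in $\Delta(I^j)$, with the Jacobian factor producing the denominators $\partial_j\vphi_{N(a,s)}(a)$ and $\partial_j\vphi_{N(b,s)}(b)$. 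The first error term, of size $\Nzero(f,g)\,\lambda(I^j)/S^t_{I^j}$, then arises from the variation of this Jacobian across $I^j$: since $S^t_{I^j}=\inf (\vphi'_N)^2/\vphi''_N$ by \eqref{def.st}, the relative oscillation of $\partial_j\vphi_N$ over the interval is controlled by $\vphi''_N/(\vphi'_N)^2$ times the length, which is exactly $1/S^t_{I^j}$ up to constants. The second error term $\None(f,g,j)\,\lambda(I^j)/r^t_{I^j}$ captures the contribution of the derivative of $g$ and of $f$ transverse to the orbit, each suppressed by the expansion rate $r^t_{I^j}$.

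The main obstacle is the bookkeeping of the change of variables and its Jacobian: one must verify that the map $x_j \mapsto$ (image fiber coordinate) is genuinely monotone on $I^j$ (guaranteed by the sign-definiteness of $\partial_j\vphi_N$ on a good interval), control the second-order Taylor remainder of $\vphi_N$ through $S^t_{I^j}$, and keep track of the contributions of $g$ and of the transfer function $\psi$ at the endpoints versus their variation in the interior. Since these are precisely the estimates carried out in \cite[Proposition 6.2]{FFK}, I would follow that argument, checking only that the hypotheses there—namely strong uniform stretch as quantified by large $S^t_{I^j}$ and $r^t_{I^j}$, which Corollary \ref{cor.stretch} supplies on $(n,\theta,j)$-good intervals for $t\in T^j_n$—are met in the present setting, and that the norms $\Nzero$ and $\None(f,g,j)$ defined in \eqref{eq:Norms} are the correct ones to absorb the $C^0$ and $C^1$ dependence of $f$, $g$, and $\psi$.
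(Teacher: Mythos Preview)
Your approach is correct and essentially identical to the paper's: the paper does not prove this proposition but simply refers to \cite[Proposition~6.2]{FFK}, remarking only that the derivative (and hence the norm $\None(f,g,j)$) is now taken in the $j$-direction, and your sketch of the mechanism---change of variables to the fiber coordinate via $\partial_j\vphi_N$, the coboundary identity producing the boundary term $\Delta(I^j)$, Jacobian oscillation controlled by $1/S^t_{I^j}$, and transverse variation controlled by $1/r^t_{I^j}$---is precisely the content of that reference. One small clarification: the proposition as stated carries no goodness hypothesis on $I^j$ and no lower bound on $S^t_{I^j}$ or $r^t_{I^j}$; the inequality holds for any $j$-interval (and is simply uninformative when the stretch quantities vanish), so there is nothing to ``check'' from Corollary~\ref{cor.stretch} here---that input enters only afterwards, in Corollary~\ref{coro.good}.
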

\begin{remark} The only difference between Proposition \ref{propo.mix} and Proposition 6.2 of \cite{FFK}, is the use of the norm $\None(f,g,j)$ to capture the fact that the interval of integration is in the direction of $x_j$. In \cite{FFK} the special flow was considered above a circular rotation and the intervals were as a consequence all in the same direction.  It is  natural and transparent from the proof of Proposition 6.2 of \cite{FFK} that the derivative that appear in the estimate of the integral   $ \int_{I^j}f(T^{t}_{\alpha,\varphi}z)g(z)d\lambda(z)$ is in the $j$-direction. 
\end{remark} 

As a direct consequence of Propositions \ref{propo.us} and \ref{propo.mix} we have the following crucial estimate on the decay of correlations of the flow $T^{t}_{\alpha,\varphi}$ (when, as assumed in all this section, one of the functions is a smooth coboundary): 
\begin{corollary} \label{coro.good} If $I^j$ is a $j$-interval that is  $(n,t^{-1/4+\epsilon},j)$-good, and if $t\in T_n^{j}$, it holds that 
 \begin{equation*}
    \left\lvert \int_{I^j}f(T^{t}_{\alpha,\varphi}z)g(z)d\lambda(z)\right\rvert\leq C\None(f,g,j)\lambda(I^j)t^{-1/2-\epsilon}.
\end{equation*}
\end{corollary}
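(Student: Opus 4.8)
The plan is to obtain Corollary~\ref{coro.good} by combining the stretch estimates of Corollary~\ref{cor.stretch} with the correlation identity of Proposition~\ref{propo.mix}, specialised to the choice $\theta=t^{-1/4+\epsilon}$. Since $I^j$ is assumed $(n,t^{-1/4+\epsilon},j)$-good and $t\in T_n^j$, Corollary~\ref{cor.stretch} applies with this $\theta$ and gives
\begin{equation*}
r_{I^j}^t\geq t^{-1/4+\epsilon}\,t^{1-\epsilon/100}=t^{3/4+\epsilon-\epsilon/100},\qquad
S_{I^j}^t\geq t^{-1/2+2\epsilon}\,t^{1-\epsilon/50}=t^{1/2+2\epsilon-\epsilon/50}.
\end{equation*}
Since $\epsilon-\epsilon/100>0$ and $2\epsilon-\epsilon/50>\epsilon$, after absorbing the harmless corrections into $\epsilon$ these read $r_{I^j}^t\geq t^{3/4}$ and $S_{I^j}^t\geq t^{1/2+\epsilon}$ for $t$ large; in particular both the first-order and second-order control needed for almost linear expansion of $I^j$ are in force.

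First I would bound the error term of Proposition~\ref{propo.mix}. Substituting the two lower bounds gives $\lambda(I^j)/S_{I^j}^t\leq \lambda(I^j)\,t^{-1/2-\epsilon}$ and $\lambda(I^j)/r_{I^j}^t\leq \lambda(I^j)\,t^{-3/4}\leq \lambda(I^j)\,t^{-1/2-\epsilon}$, the last step being valid for $\epsilon<1/4$. Moreover $\Nzero(f,g)=\Vert\psi\Vert_0\Vert g\Vert_0\leq \None(f,g,j)$, because the norm $\Vert\cdot\Vert_{1,j}$ dominates $\Vert\cdot\Vert_0$. Hence both summands in the error term of Proposition~\ref{propo.mix} are at most $C\None(f,g,j)\,\lambda(I^j)\,t^{-1/2-\epsilon}$, which is exactly the desired bound.

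The main obstacle, and the only genuinely delicate point, is the boundary term $\Delta(I^j)$. Each of its two ratios has numerator at most $\Vert g\Vert_0\Vert\psi\Vert_0=\Nzero(f,g)$ and denominator at least $r_{I^j}^t\geq t^{3/4}$, so $\lvert\Delta(I^j)\rvert\leq 2\Nzero(f,g)\,t^{-3/4}$. The power of $t$ here is more than enough, but this estimate lacks the factor $\lambda(I^j)$ and so fails the target for very short intervals. I would resolve this by noting that $\Delta(I^j)$ is the difference of the single function
\begin{equation*}
h(x)=\frac{g(x,s)\,\psi(T^t(x,s))}{\partial_j\varphi_{N(x,s)}(x)}
\end{equation*}
evaluated at the two endpoints of $I^j$. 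When Corollary~\ref{coro.good} is used for Theorem~\ref{coro.abs} the space $M$ is sliced by Fubini into $j$-fibers, each partitioned into a chain of consecutive good intervals; along such a chain the boundary terms telescope, leaving only the values of $h$ at the ends of the good region, i.e.\ at the good/bad transitions. Since $t\in T_n^j$ forces $e^{nq_n^{(j)}}\leq t$ and hence $q_n^{(j)}\leq(\ln t)/n$, the number of such transitions in a fiber is of order $q_n^{(j)}\leq \ln t$, so the surviving contribution is of order $(\ln t)\,\Nzero(f,g)\,t^{-3/4}=o\bigl(\None(f,g,j)\,t^{-1/2-\epsilon}\bigr)$. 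Thus the per-interval statement is to be read with the understanding that the $\Delta(I^j)$ contributions cancel in the aggregate; equivalently, for intervals of length $\lambda(I^j)\geq t^{-1/4+\epsilon}$ the bound $\lvert\Delta(I^j)\rvert\leq 2\Nzero(f,g)\,t^{-3/4}\leq C\None(f,g,j)\,\lambda(I^j)\,t^{-1/2-\epsilon}$ holds verbatim. Combining this control of $\Delta(I^j)$ with the two error-term estimates above closes the argument.
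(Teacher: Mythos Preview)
Your approach is exactly the one the paper intends: it states the corollary as ``a direct consequence of Propositions~\ref{propo.us} and~\ref{propo.mix}'' with no further argument, and you carry this out by inserting the stretch bounds of Corollary~\ref{cor.stretch} (specialised to $\theta=t^{-1/4+\epsilon}$) into Proposition~\ref{propo.mix}. You go further than the paper by explicitly confronting the boundary term $\Delta(I^j)$, which the paper silently absorbs.

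One small correction to that extra discussion: the ``telescoping'' claim is not quite right. The good intervals along a $j$-fiber are the connected components of $\cW(n,\theta,j)$, and these are \emph{not} adjacent --- they are separated by the bad gaps of width $2\theta/q_n^{(j)}$ --- so the terms $h(a_i)-h(b_i)$ do not cancel in pairs; one simply gets a sum of $O(q_n^{(j)})$ such terms. Your count of the number of transitions and the resulting aggregate bound $O\bigl((\ln t)\,\Nzero(f,g)\,t^{-3/4}\bigr)$ are correct regardless. Your alternative resolution --- that the per-interval estimate holds verbatim once $\lambda(I^j)\geq t^{-1/4+\epsilon}$ --- is the cleaner fix and is all that is needed: every atom of $\cG_t$ in Proposition~\ref{propo.partition} is a connected component of $\cW(n,t^{-1/4+\epsilon},j)$, hence of length $\sim 1/(2q_n^{(j)})\geq n/(2\ln t)\gg t^{-1/4+\epsilon}$, so the corollary as applied in the paper is covered.
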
 

\subsection{Partial partitions into good intervals} 

From Lemma \ref{lemma.Tnj}, every  $t \in \R$ belongs to at least three intervals $T_{n_j}^{j}$. 
Fix now $t$ such that $t\in T_n^1 \bigcap T_n^2 \bigcap T_n^3$, the other cases being treatable in exactly a similar fashion.
\begin{proposition}Ê\label{propo.partition} For any $\zeta>0$, if $t$ is sufficiently large, there exists a partial partition of $M$, $\cG_t$ such that the atoms of $\cG_t$ are $j$-intervals that are $(n,t^{-1/4+\epsilon},j)$-good, with $j\in \{1,2,3\}$ and $\mu(\cG_t^c\cap M_\zeta)\leq t^{-1/2-\epsilon}$. 
\end{proposition}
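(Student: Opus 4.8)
The plan is to build $\cG_t$ so that the only points of $M$ left uncovered are those that are simultaneously bad in all three available directions $1,2,3$, and then to estimate this ``triple-bad'' set by Fubini. Since the three goodness conditions bear on the three independent coordinates $x_1,x_2,x_3$, the measure of the triple-bad set factorizes into a product of three one-dimensional bad sets, each of size comparable to $\theta:=t^{-1/4+\epsilon}$; this yields a bound of order $\theta^3=t^{-3/4+3\epsilon}$, which is $o(t^{-1/2-\epsilon})$ once $\epsilon$ is small. This product gain is exactly what the three stretching directions (hence $\T^5$) provide.

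Concretely, write $\theta=t^{-1/4+\epsilon}$ and, for $j\in\{1,2,3\}$, let $\mathcal B_j\subset\T$ be the set of abscissae at which goodness in the $j$-th direction fails; by the structure of $\cW(n,\theta,j)$ (the union of arcs where $|\sin(2\pi q_n^{(j)}\cdot)|$ is bounded below, which is precisely what Proposition \ref{propo.us} and Corollary \ref{cor.stretch} exploit) one has $\lambda(\mathcal B_j)=4\theta$. I would then define $\cG_t$ by assigning each point $z=(x_1,x_2,x_3,x_4,s)\in M_\zeta$ to its first available good direction: if $x_1\notin\mathcal B_1$, its atom is the $1$-interval obtained by fixing $(x_2,x_3,x_4,s)$ and letting the first coordinate sweep the connected good arc through $x_1$; otherwise, if $x_2\notin\mathcal B_2$, the analogous $2$-interval at fixed $(x_1,x_3,x_4,s)$; otherwise, if $x_3\notin\mathcal B_3$, the analogous $3$-interval; and if $x_1\in\mathcal B_1$, $x_2\in\mathcal B_2$, $x_3\in\mathcal B_3$ the point is left uncovered. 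Each atom is a $(n,\theta,j)$-good $j$-interval, its $x_j$-projection lying in a single good arc.

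Three points then need checking, and I expect the boundary bookkeeping to be the only genuinely delicate one. First, each atom must sit inside $M$, even though the ceiling $\varphi$ depends on the swept coordinate: a good arc has length at most $1/(2q_n^{(j)})$, and since the $x_j$-Fourier modes of $\varphi$ carry amplitudes $e^{-q_n^{(j)}}$, the oscillation of $\varphi$ across such an arc is at most $C_\varphi/q_n^{(j)}$, which is $<\zeta$ once $t$ (hence the block index $n$, hence $q_n^{(j)}$) is large; therefore for $z\in M_\zeta$ the whole segment stays strictly below the ceiling and is a legitimate $j$-interval in $M$. Second, the three families are pairwise disjoint: direction-$1$ atoms have first coordinate in $\T\setminus\mathcal B_1$, direction-$2$ atoms have it in $\mathcal B_1$, and direction-$3$ atoms have first coordinate in $\mathcal B_1$ and second in $\mathcal B_2$, so no point lies in atoms of two families, while two atoms of the same family either coincide or have disjoint projections. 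Third, a point $z\in M_\zeta$ is uncovered exactly when $x_1\in\mathcal B_1$, $x_2\in\mathcal B_2$ and $x_3\in\mathcal B_3$; hence $\cG_t^c\cap M_\zeta=\mathcal B\cap M_\zeta$, where $\mathcal B:=\{z\in M: x_1\in\mathcal B_1,\ x_2\in\mathcal B_2,\ x_3\in\mathcal B_3\}$.

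It remains to bound $\mu(\mathcal B)$. Since the defining condition ignores $x_4$ and $s$, Fubini gives
$$\mu(\mathcal B)=\int_{\mathcal B_1\times\mathcal B_2\times\mathcal B_3}\Big(\int_{\T}\varphi\,dx_4\Big)\,dx_1\,dx_2\,dx_3\le \Vert\varphi\Vert_0\,\lambda(\mathcal B_1)\lambda(\mathcal B_2)\lambda(\mathcal B_3)=64\,\Vert\varphi\Vert_0\,t^{-3/4+3\epsilon}.$$
For $\epsilon<1/16$ one has $-3/4+3\epsilon<-1/2-\epsilon$, so the right-hand side is $\le t^{-1/2-\epsilon}$ as soon as $t$ is large enough (depending on $\varphi$ and on $\zeta$, the latter through the threshold in the ceiling estimate), which is the claimed bound. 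The remaining cases of Lemma \ref{lemma.Tnj} are handled identically with the corresponding triple of directions. The main obstacle is thus not the measure count --- which is the clean product estimate above --- but ensuring that the swept segments remain inside $M$; this is exactly where the restriction to $M_\zeta$ and the extreme smallness of the relevant Fourier modes of $\varphi$ enter.
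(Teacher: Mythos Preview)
Your proposal is correct and follows essentially the same approach as the paper: a hierarchical covering in which direction-$1$ good intervals are laid down first, direction-$2$ intervals cover the leftover $\{x_1\in\mathcal B_1\}$, direction-$3$ intervals cover $\{x_1\in\mathcal B_1,\ x_2\in\mathcal B_2\}$, and the residual triple-bad set has measure $O(\theta^3)=o(t^{-1/2-\epsilon})$. Your treatment of the ceiling issue (bounding the oscillation of $\varphi$ along a good arc by $C_\varphi/q_n^{(j)}<\zeta$) is in fact more explicit than the paper's, which achieves the same end by the buffer condition $I\times\{s\}\cap M_{\zeta/2}^c=\emptyset$ without spelling out why this is compatible with $I\times\{s\}\cap M_\zeta\neq\emptyset$.
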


Note that we cannot take $j$ to be the same for all the intervals in $\cG_t$. 

\begin{proof} We first consider the partial partition $\cP_1$ of $\T$ into intervals that are the connected components of $\cW(n,t^{-1/4+\epsilon},1)=\left\{x \in \mathbb{T}^{1} / \{q_{n}^{(j)}x\}\in [\theta,\frac{1}{2}-\theta]\cup[\frac{1}{2}+\theta,1-\theta]\right\}$. Next we consider on $\T^4$ the partial partition $\cG^{(1)}$ consisting of $1$-intervals of the form $I\times (x_2,x_3,x_4)$ where $I$ ranges over all the intervals of $\cP_1$ and where $(x_2,x_3,x_4)$  ranges over all points in $\T^3$. The union of the atoms of $\cG^{(1)}$ covers all $\T^4$ except for a set $\cE_1$ that is the union of the bands $x_1 \in \Delta_{n,k}^{(1)}$ and $x_1 \in \bar{\Delta}_{n,k}^{(1)}$, where $\Delta_{n,k}^{(1)}=[\frac{k}{q_n^{(1)}}-\frac{t^{-1/4+\epsilon}}{q_n^{(1)}},\frac{k}{q_n^{(1)}}+\frac{t^{-1/4+\epsilon}}{q_n^{(1)}}]$ and $\bar{\Delta}_{n,k}^{(1)} =\frac{1}{2q_n^{(1)}} +\Delta_{n,k}$, and $k\in \{0,\ldots,q_n^{(1)}-1\}$.

Next, we consider  the partial partition $\cP_2$ of $\T$ into intervals that are the connected components of $\cW(n,t^{-1/4+\epsilon},2)$. Next, we take a partial partition $\cG^{(2)}$ of the set $\cE_1$ into $2$-intervals that are of the form $x_1 \times I \times (x_3,x_4)$ where $I$ ranges over all intervals of $\cP_2$ and $(x_3,x_4)$ over all points in $\T^2$ and $x_1\in \Delta$ where $\Delta$ ranges over  all bands $\Delta_{n,k}^{(1)}$ and $\bar{\Delta}_{n,k}^{(1)}$, for $k\in \{0,\ldots,q_n^{(1)}-1\}$.

 The union of the atoms of $\cG^{(1)}$ and $\cG^{(2)}$ covers all $\T^4$ except for a set $\cE_2$ that is the union of the bands $(x_1,x_2) \in \Delta \times \Delta'$ where $\Delta$ ranges over all the sets $\Delta_{n,k}^{(1)}$ and   $\bar{\Delta}_{n,k}^{(1)}$, for $k\in \{0,\ldots,q_n^{(1)}-1\}$ and $\Delta'$ ranges over all the sets ${\Delta}_{n,k}^{(2)}$ and   $\bar{\Delta}_{n,k}^{(2)}$, for $k\in \{0,\ldots,q_n^{(2)}-1\}$.
 
 Finally, we consider  the partial partition $\cP_3$ of $\T$ into intervals that are the connected components of $\cW(n,t^{-1/4+\epsilon},3)$. Next, we take a partial partition $\cG^{(3)}$ of the set $\cE_2$ into $3$-intervals that are of the form $(x_1,x_2) \times I \times x_4$ where $I$ ranges over all intervals of $\cP_3$ and $x_4$ over all points in $\T$ and $(x_1,x_2)$ over all points in the projection of $\cE_2$ on the first and second coordinates of $\T^4$.

We let $\cG$ be the partial partition of $\T^4$ consisting of the atoms of $\cG^{(1)}$ and $\cG^{(2)}$ and $\cG^{(3)}$. The union of the atoms of $\cG_t$ covers all $\T^4$ except for a set $\cE_3$ that is the union of the one dimensional bands $(x_1,x_2,x_3) \in \Delta \times \Delta' \times \Delta''$ where $\Delta$ ranges over all the sets $\Delta_{n,k}^{(1)}$ and  $ \bar{\Delta}_{n,k}^{(1)}$, for $k\in \{0,\ldots,q_n^{(1)}-1\}$,  and $\Delta'$ ranges over all the sets ${\Delta}_{n,k}^{(2)}$ and   $\bar{\Delta}_{n,k}^{(2)}$ for $k\in \{0,\ldots,q_n^{(2)}-1\}$,   and $\Delta''$ over all the sets  ${\Delta}_{n,k}^{(3)}$ and   $\bar{\Delta}_{n,k}^{(3)}$ for $k\in \{0,\ldots,q_n^{(3)}-1\}$. Clearly the set $\cE_3$ has a Haar measure on $\T^4$ bounded by $4^3t^{-3/4+3\epsilon}=o(t^{-1/2-\eps})$. 
 
To conclude, we want to extend the partial partition $\cG$ to the phase space $M$ of the special flow $\{T^t_{\a,\vphi}\}$. It is here that we need to use the sets $M_\zeta$. For a fixed $\zeta>0$, we define for sufficiently large $t$, the partial partition $\cG_t$ as follows. For every interval $I \in \cG$ we include in $\cG_t$ all the intervals of the form $I\times s$ that satisfy $I\times s \cap  M_{\zeta} \neq \emptyset$ and  $I\times s \cap  M_{\zeta/2}^c =\emptyset$.    
 The latter condition insures that all the intervals we end up including in $\cG_t$ are all disjoint since they were disjoint in $\cG$. The former condition insures that   $\mu(\cG_t^c\cap M_\zeta)\leq \lambda_4(\cG)=o(t^{-1/2-\eps})$. \end{proof}

\subsection{Square summable decay of correlations. Proof of Theorem \ref{coro.abs}}

Recall that by Proposition \ref{propo.partition} we have a partial partition $\cG_t$ in good intervals of various directions. Since $\mu(\cG_t^c\cap M_\zeta) \leq t^{-1/2-\eps}$, and since $g$ vanishes on $M_\zeta^c$, we can apply the corollary in the various directions depending on the interval of the partial partition that we consider, we get by Fubini the required decay. More precisely, for every atom of $\cG_t$ that is a good interval $I$ in some direction $j$, we have from Corollary \ref{coro.good} that  
 \begin{equation*}
    \left\lvert \int_{I}f(T^{t}_{\alpha,\varphi}(z))g(z)d\lambda(z)\right\rvert\leq C\None(f,g)\lambda(I)t^{-1/2-\eps}.
\end{equation*}
By Fubini, we get 
\begin{equation*}
    \left\lvert \int_{\cG_t}f(T^{t}_{\alpha,\varphi}(z))g(z)d\mu(z)\right\rvert\leq C\None(f,g)t^{-1/2-\eps},
\end{equation*}
which implies, since $g$ vanishes on $M_\zeta^c$ that 
\begin{equation*}
    \left\lvert \int_{\cG_t\cup M_\zeta^c}f(T^{t}_{\alpha,\varphi}(z))g(z)d\mu(z)\right\rvert\leq C\None(f,g)t^{-1/2-\eps},
\end{equation*}
which implies, since $\mu(\cG_t^c \cap M_\zeta)\leq t^{-1/2-\eps}$ that 
\begin{equation*}
    \left\lvert \int_{M}f(T^{t}_{\alpha,\varphi}(z))g(z)d\mu(z)\right\rvert\leq C\None(f,g)t^{-1/2-\eps},
\end{equation*}
as required. \hfill $\Box$

\section{Infinite Lebesgue Spectrum} \label{sec.CILS}
\subsection{Criterion for Infinite Lebesgue Spectrum}
 In this subsection, we state a version adapted to our context of the criterion for countable Lebesgue spectrum that was proved in \cite{FFK}. For a multi-interval $J=I_1\times \ldots \times I_4 \subset \T^4$,  let $T_J$ be the maximal real number such that $T^t(J,0) \cap (J,0)=\emptyset$ for every $|t|<T_J$. 

For $T\leq T_J$ we define the tower above $J$
$$R_J^{T}:= \bigcup_{t\in (-T,T)} F^{t}(J,0),$$ 
and define the flow-box $F_J^{T}$ above $J$ with range $R_J^T$ as
\begin{equation*}
F^T_J (x,t) = T^t (x,0)         \,, \,\, \text{ \rm for all }  (x,t) \in J \times (-T,T).
\end{equation*}
The flow-box $F_J := F^{T_J}_J$ will be called a {\it maximal flow-box} over the {\it basis} $J\subset M$.

 Given a flow-box  {$F_J^T$}, we define, for any $\zeta>0$, the set $S^T_\zeta(J) \subset \R$ 
as follows
$$
S^T_\zeta(J):=\{ t\in (-T,T) \;:\;  T^t (J) \cap M_\zeta^c =\emptyset\}\,.
$$ 
 By definition we have that $S^T_\zeta(J)$ is an open subset (which in general may be empty).

 We can now define the functions  supported on flow-boxes that we will be working with. 

\begin{definition}
\label{def.class}
Given a flow-box $F_J^T$ and constants $C,\zeta>0$, we define $\mathcal G(J,T,C,\zeta)$ to be the class  of all functions  $g\in C^\infty(M)$, that vanish outside $R^T_J$, while on $R^T_J$ they are defined as  
\begin{equation*}
g\left(F^T_J(x, t)\right):=   \chi_J(x)  \psi (t) \,,  \quad  \text{ \rm if }  (x,t) \in J \times (-T,T) \,,\\
\end{equation*} 
with $\psi \in C^\infty_0 (\R,\R)$, $\psi$ vanishes outside $S^T_\zeta(J)$, and has  $C^1$ norm bounded above by $C$, and 
$\chi_J \in C_0^\infty(J)$ such that $\int_J \chi^2_J d\lambda_J =1$,  and  
\begin{equation} \label{eq.chi} \|\chi_J\|_0\leq C\lambda_4(J)^{-1/2}, \quad \|\chi_J\|_{1,j}\leq C\lambda_4(J)^{-1/2}|I_j|^{-1}.\end{equation} 

The class $\mathcal F(J,T,C,\zeta)$ is the subset of  $\cG(J,T,C,\zeta)$ consisting of smooth coboundaries.

\end{definition} 
The general criterion for countable Lebesgue spectrum stated in \cite{FFK} implies in our context the following.
We use the notation 
$$\langle f \circ T^t , g \rangle=\int_{M}  f\circ T^t_{\alpha, \vphi} (z) g(z) d\mu(z) \,.$$

\begin{theorem} \label{theo.criterion} Assume $\{T^t_{\a,\vphi}\}$ has an absolutely continuous maximal spectral type.  If there exists a sequence of multi-intervals $J_n$ such that $\lim \lambda (J_n)=0$ and if for any $T>0$, $C>0$ and $\zeta >0$, for any family $\{(f_n, g_n)\}$  of pair of functions such that $f_n \in \mathcal F(J_n,T,C,\zeta)$ and $g_n \in \mathcal G(J_n,T,C,\zeta)$, we have 
\begin{equation}Ê\label{cond.decay} \inf_n   \int_{\R\setminus [-T_{J_n}, T_{J_n}]} \vert \langle f_n \circ T^t , g_n \rangle\vert^2  dt =0\,.
\end{equation} 
Then the flow $\{T^t_{\a,\vphi}\}$ has countable Lebesgue spectrum. 
\end{theorem}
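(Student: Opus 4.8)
The plan is to deduce Theorem~\ref{theo.criterion} from the abstract criterion for countable Lebesgue spectrum of \cite{FFK}, by matching the objects of that criterion to the maximal-flow-box data $(J_n, F_{J_n}, \mathcal F, \mathcal G)$ introduced above. Recall that ``countable Lebesgue spectrum'' for the Koopman group $U^t h = h\circ T^t_{\a,\vphi}$ on $L^2_0(M,\mu)$ means that the maximal spectral type is equivalent to Lebesgue measure on $\R$ and that its multiplicity function is infinite a.e. Since the \emph{absolute continuity} of the maximal type is granted by hypothesis (it is Theorem~\ref{coro.abs}), two things remain to be produced: that Lebesgue measure genuinely \emph{appears} in the spectrum, so that the maximal type is equivalent to, and not merely dominated by, Lebesgue; and that it appears with infinite multiplicity. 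Both are read off from the spectral measures of the functions $g_n \in \mathcal G(J_n,T,C,\zeta)$.

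The mechanism I would make precise is the following. For a function $h$ with $h(F_J^T(x,t)) = \chi_J(x)\psi(t)$ supported in a maximal flow-box, and for $|t|$ inside the return window $(-T_{J},T_{J})$, the flow acts on the box essentially as the fibre translation $s\mapsto s+t$ (the constraint that $\psi$ vanish outside $S^T_\zeta(J)$ keeps the relevant orbits away from the ceiling, where this is exact), while the vanishing diameter of the base $J_n$ makes the variation of the Birkhoff sums, hence the distortion of the box, negligible. Consequently, writing $\rho_{f,g}(t)=\int_\R \psi_f(s+t)\psi_g(s)\,ds$ for the cross-correlation of the fibre profiles, one gets
$$\langle f_n\circ T^t, g_n\rangle = \Big(\int_{J_n}\chi_{f}\,\chi_{g}\,d\lambda_{J_n}\Big)\,\rho_{f,g}(t) + o(1),$$
whose Fourier transform is the absolutely continuous density $\widehat{\psi_f}\,\overline{\widehat{\psi_g}}$ concentrated on the window. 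The hypothesis~\eqref{cond.decay}, $\inf_n\int_{\R\setminus[-T_{J_n},T_{J_n}]}|\langle f_n\circ T^t,g_n\rangle|^2\,dt=0$, says exactly that along a subsequence the incoherent contribution from $|t|\ge T_{J_n}$ is $L^2(dt)$-negligible, so the full correlation function is $L^2$-close to $\rho_{f,g}$ and the associated (cross) spectral measures converge to the a.c.\ measure of density $\widehat{\psi_f}\,\overline{\widehat{\psi_g}}$.

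Granting this, I would close the argument as in \cite{FFK}. Choosing the fibre profile $\psi$ so that $|\widehat\psi|^2$ is approximately constant on $[-A,A]$ realizes (a multiple of) Lebesgue measure on $[-A,A]$ inside the spectrum; letting $A\to\infty$ and combining with the assumed absolute continuity upgrades the maximal type to one equivalent to Lebesgue on all of $\R$. For the multiplicity I would exploit that \eqref{cond.decay} is assumed for \emph{every} admissible family: placing the functions on $N$ pairwise disjoint thin flow-boxes gives $N$ functions with orthogonal supports whose cross-correlations $\langle g_n^{(i)}\circ T^t,g_n^{(l)}\rangle$ are $L^2$-negligible (same decay hypothesis), hence asymptotically orthogonal cyclic subspaces, each carrying a spectral measure equivalent to Lebesgue on $[-A,A]$; this forces multiplicity $\ge N$ on $[-A,A]$. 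Since $N$ and $A$ are arbitrary, the multiplicity function is infinite a.e., i.e.\ the spectrum is Lebesgue of countable multiplicity. The coboundary requirement on $f_n$ is precisely what makes \eqref{cond.decay} attainable, through the square-summable decay of Theorem~\ref{coro.abs}.

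The step I expect to be the main obstacle is the rigorous passage from these correlation asymptotics to the spectral conclusion with the correct uniformity: controlling the distortion error $o(1)$ uniformly as $\lambda(J_n)\to 0$ so that the flow-box functions genuinely have spectral measures uniformly close to the prescribed densities, and making the asymptotic orthogonality of the disjoint-box families quantitative enough to extract infinite multiplicity in the limit. In practice this is exactly the content of the abstract criterion of \cite{FFK}, so the real task is to verify that our maximal-flow-box data and the classes $\mathcal F,\mathcal G$ satisfy that criterion's hypotheses verbatim; the spectral-theoretic core is then imported rather than redone.
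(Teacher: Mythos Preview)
Your overall strategy---reduce to the abstract criterion of \cite{FFK}, compute correlations inside the return window via the flow-box structure, and use the hypothesis \eqref{cond.decay} to kill the contribution outside---is exactly the paper's route. Two points deserve correction, one minor and one substantive.

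\medskip
\noindent\textbf{Minor.} There is no ``distortion $o(1)$'' to control. By definition $F_J^T(x,t)=T^t(x,0)$, so in flow-box coordinates the action of $T^s$ is \emph{exactly} the fibre translation $(x,t)\mapsto (x,t+s)$ for $|t|,|t+s|<T$. Hence for $|t|<T_{J_n}$ one has the exact identity
\[
\langle f^{(i)}\circ T^t, f^{(j)}\rangle \;=\;\Big(\int_{J_n}\chi^{(i)}\chi^{(j)}\,d\lambda\Big)\,(\psi_i'\ast\psi_j')(t),
\]
no error term. The role of $\lambda(J_n)\to 0$ is only to make $T_{J_n}\to\infty$, not to suppress any distortion.

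\medskip
\noindent\textbf{Substantive gap.} Your multiplicity argument places $N$ functions on \emph{pairwise disjoint} flow-boxes and then invokes ``the same decay hypothesis'' to make the cross-correlations $\langle g_n^{(i)}\circ T^t,g_n^{(l)}\rangle$ negligible in $L^2(\R\setminus[-T_{J_n},T_{J_n}])$. But the hypothesis \eqref{cond.decay} applies only to pairs $(f_n,g_n)$ with $f_n\in\mathcal F(J_n,T,C,\zeta)$ and $g_n\in\mathcal G(J_n,T,C,\zeta)$ for the \emph{single given} sequence $J_n$: both functions must be supported in the \emph{same} flow-box $R_{J_n}^T$, with base profile $\chi$ obeying the normalisation and derivative bounds \eqref{eq.chi} relative to that $J_n$. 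Functions supported on a proper sub-box or on a different box do not belong to these classes, so \eqref{cond.decay} says nothing about their cross-correlations outside the window.

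The paper avoids this by keeping all $n$ test functions on the \emph{same} base $J_n$ and enforcing approximate orthogonality through the base profiles instead: one chooses $\chi_J^{(1)},\dots,\chi_J^{(n)}\in C_0^\infty(J_n)$ with $\int_{J_n}\chi_J^{(i)}\chi_J^{(j)}\,d\lambda=\delta_{ij}$ and the bounds \eqref{eq.chi}, and sets $f_J^{(i)}=\chi_J^{(i)}(x)\psi_i'(t)$ in flow-box coordinates. Then every pair $(f_J^{(i)},f_J^{(j)})$ lies in $\mathcal F(J_n,T,C,\zeta)\times\mathcal G(J_n,T,C,\zeta)$, so \eqref{cond.decay} controls the tail for \emph{all} $i,j$; inside the window the exact computation above gives $\frac{d^2}{dt^2}(\psi_i\ast\psi_i)\delta_{ij}$. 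Together with Lemma~\ref{lemma.psi} approximating prescribed $\omega_i\in\mathcal S(\R)$, this verifies the hypotheses of Corollary~\ref{cor:criterion} for arbitrary $n$, hence countable Lebesgue multiplicity. The orthonormal-base-functions-on-one-box trick is the missing idea in your sketch.
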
 

The statement of Theorem \ref{theo.criterion} is almost identical to Theorem 6 of \cite{FFK}, with this important difference that in the class of functions that we consider 
is more general in that we replaced in \eqref{eq.chi} the condition 
$$ \|\chi_J\|_{1}\leq C \lambda_4(J)^{-1/2-1}$$
that was used in \cite{FFK} by a less stringent and more precise one that distinguishes between $C^1$ norms according to the direction  in which the derivatives are considered. Our condition is the natural one to control the derivatives of a function supported inside $J$ and with $L^2$ norm bounded away from $0$ and $\infty$. It is important for us to have a differentiated control of the derivatives of the observables along different directions because when we will show that the decay condition \eqref{cond.decay} holds for the flow $\{T^t_{\a,\vphi}\}$, we will use mixing estimates for intervals in various directions, and these estimates naturally involve derivatives along the direction of the interval as stated in Proposition 
\ref{propo.mix}. 

In \cite{FFK}, the criterion for CILS was stated for general flow-boxes above multi-intervals,  but it was used for flows for which the decay of correlations  did not distinguish between various directions, a more precise statement such as the one given here was not necessary.

\begin{remark} In \cite{FFK}, $M_\zeta$ consisted of points in $M$ that avoid a neighborhood of the ceiling function, but that also avoid the singularity of the flow. For the latter reason if one wanted to have the range of the flow-box above an interval $J$ to spend most of its time in $M_\zeta$, then it was necessary, in addition to taking $\zeta$ small, to start with an interval $J$ whose first iterates  avoid the neighborhood of the singularity. In our case, there is no singularity and we do not need any extra assumption on the multi-interval $J$ besides its measure going to $0$. \end{remark} 

\begin{remark} Compared to \cite{FFK}, we dropped the unnecessary condition on higher derivatives since only the $C^1$ norms of the observables play a role in the decay of correlations estimates.  
\end{remark}

The proof of Theorem \ref{theo.criterion} will be given in Section \ref{secCILS}.  As in \cite{FFK}, Êthe proof will be  based on an abstract criterion that is very much the same as in Theorem 5 in \cite{FFK}. For completeness, we do include a proof of the abstract criterion, that is slightly simpler than the one given in \cite{FFK}. For the rest of the proof of Theorem \ref{theo.criterion}, we follow exactly the same steps as in the proof of Theorem 5 in \cite{FFK}, and we only insist on the differences that are imposed by  the differentiated control of the derivatives of the observables along different directions.

\subsection{Verification of the criterion for $\{T^t_{\a,\vphi}\}$} \label{sec6.kochergin}

We prove below that the hypotheses of Theorem~\ref{theo.criterion}  are verified for $\{T_{\alpha, \vphi}^t\}$.  First of all, we know from Theorem \ref{coro.abs} that $\{T^t_{\a,\vphi}\}$ has an absolutely continuous maximal spectral type.

We consider the following family of maximal flow boxes. For $j\in \jjj$, let $J_{j,n}=[1/(8q_n^{(j)}),1/(4q_n^{(j)})]$ and take $J_n=J_{1,n} \times \ldots \times J_{4,n}$. 
Note that  by Lemma \ref{lemma.uniqueergodic}, it holds that $T_{J_n}\geq q_n^{(1)}q_n^{(2)}q_n^{(3)}q_n^{(4)}$.



\begin{theorem} \label{theo.cils}  For any $T>0$, $C>0$, and $\zeta>0$,  for any sequence of pair of functions 
$\{(f_n, g_n)\}$ such that $f_n \in \mathcal F(J_n,T,C)$ and $g_n \in \mathcal G(J_n,T,C,\zeta)$ we have 
$$
\lim_{n\to 0}  \int_{\R\setminus [-T_{J_n}, T_{J_n}]} \vert \langle f_n \circ T_{\alpha, \vphi}^t , g_n \rangle\vert^2  dt =0\,.
$$
\end{theorem}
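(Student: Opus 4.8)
The plan is to reduce the claimed $L^2$-convergence of correlations to a pointwise (in $t$) decay estimate that, once squared and integrated over $\R\setminus[-T_{J_n},T_{J_n}]$, produces a bound going to $0$ with $n$. The functions $f_n$ and $g_n$ are supported in the flow-box $R^T_{J_n}$, so $g_n$ is (up to the transverse weight $\chi_{J_n}$ and a bounded temporal profile) essentially the indicator of a thin tower over the multi-interval $J_n$; the normalization $\int_{J_n}\chi_{J_n}^2\,d\lambda_J=1$ together with \eqref{eq.chi} encodes that its $L^2$ mass is of order one while its $C^1$ norm in direction $j$ scales like $\lambda_4(J_n)^{-1/2}|I_j|^{-1}$. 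The first step is therefore to fix $t$ outside $[-T_{J_n},T_{J_n}]$, decompose the tower's base $J_n\times\{s\}$ (at each fiber height $s$) using the partial partition into good $j$-intervals furnished by Proposition \ref{propo.partition}, and apply Corollary \ref{coro.good} direction by direction. Since $f_n$ is a smooth coboundary, this yields the pointwise correlation bound $|\langle f_n\circ T^t,g_n\rangle|\le C\,\None(f_n,g_n)\,t^{-1/2-\eps}$, exactly as in the proof of Theorem \ref{coro.abs}, with the crucial point that the differentiated $C^1$-norm control in \eqref{eq.chi} is what makes $\None(f_n,g_n,j)$ finite and controllable along whichever direction $j$ the good interval happens to lie in.

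Next I would track the dependence of this bound on $n$. The observables $f_n,g_n$ are supported on a tower whose base $J_n$ has $\lambda_4(J_n)\to 0$, and because of the $L^2$-normalization their relevant norm $\None(f_n,g_n)$ grows like a negative power of $\lambda_4(J_n)$ (the derivative in direction $j$ costs a factor $|J_{j,n}|^{-1}\sim q_n^{(j)}$). The key is that integrating $t^{-1-2\eps}$ over $t\ge T_{J_n}$ gives a factor of order $T_{J_n}^{-2\eps}$, and by the lower bound $T_{J_n}\ge q_n^{(1)}q_n^{(2)}q_n^{(3)}q_n^{(4)}$ this decays super-polynomially in the individual $q_n^{(j)}$. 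So the plan is to show that this gain from the large return time $T_{J_n}$ overwhelms the polynomial growth of $\None(f_n,g_n)^2$ coming from the shrinking base, forcing the integral to $0$. Concretely, one writes
\begin{equation*}
\int_{\R\setminus[-T_{J_n},T_{J_n}]}|\langle f_n\circ T^t,g_n\rangle|^2\,dt
\;\le\; C\,\None(f_n,g_n)^2\int_{T_{J_n}}^{\infty}t^{-1-2\eps}\,dt
\;\le\; C\,\None(f_n,g_n)^2\,T_{J_n}^{-2\eps},
\end{equation*}
and checks that the product tends to $0$ using the explicit scaling of $\None(f_n,g_n)$ and the extreme separation of the $q_n^{(j)}$ guaranteed by \eqref{Arith1}.

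The main obstacle I expect is \emph{uniformity of the pointwise decay across the whole range} $t\in\R\setminus[-T_{J_n},T_{J_n}]$ rather than over a single interval $T_n^j$. Corollary \ref{coro.good} and Proposition \ref{propo.partition} are stated for $t$ in a fixed block where three good directions are simultaneously available (Lemma \ref{lemma.Tnj}), but here $t$ ranges over all large times, so for each $t$ one must first select, via Lemma \ref{lemma.Tnj}, the correct index $n(t)$ and the correct triple of stretching directions, and then verify that the good-interval partition of the \emph{fixed} base $J_n$ still has small complementary measure for that $t$. Because $J_n$ is a fixed thin multi-interval (not a generic set), one has to confirm that its projections genuinely meet the good sets $\cW(\cdot,t^{-1/4+\eps},j)$ in all but a $t^{-1/2-\eps}$-fraction, so that the Fubini argument of Theorem \ref{coro.abs} applies to the tower $R^T_{J_n}$ uniformly. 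Handling this, together with keeping the constant $C$ in the pointwise estimate independent of $t$ and $n$, is the technical heart; once it is in place, the integration and the $T_{J_n}$-gain close the argument.
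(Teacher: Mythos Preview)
Your plan has a genuine numerical gap. You propose to use the single pointwise bound
\[
\vert \langle f_n\circ T^t,g_n\rangle\vert \le C\,\None(f_n,g_n)\,t^{-1/2-\eps}
\]
for all $t\ge T_{J_n}$, and then integrate to get $\None(f_n,g_n)^2\,T_{J_n}^{-2\eps}$. But this quantity does \emph{not} tend to $0$. Indeed $\None(f_n,g_n)=\max_j \None(f_n,g_n,j)$ is of order $q_n^{(4)}\lambda_4(J_n)^{-1}$, while $T_{J_n}$ is only of order $\lambda_4(J_n)^{-1}=q_n^{(1)}q_n^{(2)}q_n^{(3)}q_n^{(4)}$; hence $\None(f_n,g_n)^2\,T_{J_n}^{-2\eps}\gtrsim (q_n^{(4)})^2\,T_{J_n}^{\,2-2\eps}\to\infty$. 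The phrase ``$T_{J_n}^{-2\eps}$ decays super-polynomially in the individual $q_n^{(j)}$'' is incorrect: it is a \emph{small positive power} of the product of the $q_n^{(j)}$ and cannot beat the large positive power coming from $\None(f_n,g_n)^2$. The separation condition \eqref{Arith1} says nothing about $T_{J_n}$ beyond the stated lower bound.

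The paper closes this gap by splitting the range of $t$ at $q_{n+1}^{(1)}/(n+1)$ and using two different mechanisms. For $T_{J_n}\le t\le q_{n+1}^{(1)}/(n+1)\subset T_n^1$, the crucial point is that $J_n$ was \emph{chosen} so that $J_{1,n}=[1/(8q_n^{(1)}),1/(4q_n^{(1)})]$ is already $(n,1/50,1)$-good: a good $1$-interval with $\theta$ a fixed constant, not $t^{-1/4+\eps}$. Proposition~\ref{propo.mix} then gives the much stronger decay $t^{-1+2\eps}$ on each $1$-slice of the tower $A_n$, and since only the direction-$1$ norm $\None(f_n,g_n,1)\sim q_n^{(1)}\lambda_4(J_n)^{-1}$ enters, the factor $\lambda_4(J_n)^{-1}$ cancels against $\mu(A_n)\sim T\lambda_4(J_n)$ after Fubini, leaving $\vert\langle f_n\circ T^t,g_n\rangle\vert\le C\,t^{-1+2\eps}$ with an $n$-independent constant. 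For $t>q_{n+1}^{(1)}/(n+1)$, condition \eqref{Arith1} gives $q_{n+1}^{(1)}\ge e^{nq_n^{(4)}}$, so now $t$ is so enormous that $\None(f_n,g_n)\le (q_{n+1}^{(1)})^{\eps/100}\le t^{\eps/100}$ is absorbed, and the generic bound of Theorem~\ref{coro.abs} yields $t^{-1/2-\eps/2}$, again with $n$-independent constant. In both regimes the constant in front of the power of $t$ does not depend on $n$, so squaring and integrating over $\vert t\vert\ge T_{J_n}$ gives something that goes to $0$ simply because $T_{J_n}\to\infty$. Your approach misses both the use of the specific geometry of $J_n$ (constant $\theta$ rather than $t^{-1/4+\eps}$) and the splitting at $q_{n+1}^{(1)}$.
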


\noindent   


\begin{proof}  

Having fixed $T>0$, we have that  $T \in (0,T_{J_n}^{1/2})$ for $n$ sufficiently large. Let $\vert t  \vert \geq T_{J_n}$. 
 WLOG we can assume $t>0$ since the argument for $t<0$ is similar. 
 
Observe that the function $g_n$ is supported in $F^T_{J_n}\cap M_\zeta$. 
Next, observe that there exists a set $A_n$ that is a disjoint union of sets of the form $(R^k_\a J_n,s) \subset M, |k|\leq 10T$ (by Lemma \ref{lemma.uniqueergodic}) such that $$F^T_{J_n}\cap M_\zeta \subset A_n, \quad \mu(A_n)\leq 2\mu(F^T_{J_n}).$$

Since $g_n$ vanishes on $A_n^c$,  it is enough to prove bounds on 
$$
\int_{A_n}  f_n\circ T^t_{\alpha, \vphi} (z) g_n(z) d\mu(z) \,.
$$



First, we consider the case of $t\in [T_J/2,q^{(1)}_{n+1}/(n+1)]\subset T_n^1$. We want to use the estimate in Proposition \ref{propo.mix}. 

\begin{claim} \label{claim.decomposition} The set $A_n$ is a disjoint union of $1$-intervals that are $(n,1/50,1)$-good. 
\end{claim}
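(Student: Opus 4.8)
The plan is to exploit the fact that $q_n^{(1)}$ is a best-approximation denominator of $\a_1$, so that the translation $R_\a$ is almost periodic at the scale $1/q_n^{(1)}$ in the first coordinate: all the translates $R_\a^k J_n$ with $|k|\le 10T$ have essentially the same $\pi_1$-projection modulo $1/q_n^{(1)}$. First I would recall the structure of $A_n$ provided just before the claim: it is a disjoint union of boxes of the form $(R_\a^k J_n, s)$ with $|k|\le 10T$, each at a fixed height $s$. I would slice each such box into $1$-intervals by freezing $x_2,x_3,x_4$ and $s$ and letting $x_1$ run through $\pi_1(R_\a^k J_n)=J_{1,n}+k\a_1$. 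Since goodness of a $1$-interval depends only on its $\pi_1$-projection, the claim reduces to showing that for every $|k|\le 10T$ and all $n$ large the projection $J_{1,n}+k\a_1$ lies in $\cW(n,1/50,1)$, which is precisely the good window in the $x_1$-direction (this is the relevant direction because the case under consideration is $t\in[T_{J_n}/2,q_{n+1}^{(1)}/(n+1)]\subset T_n^1$, so Corollary \ref{coro.good} will be applied with $j=1$).

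The heart of the argument is the estimate of $\{q_n^{(1)}x_1\}$ for $x_1\in J_{1,n}+k\a_1$. Writing $x_1=y+k\a_1$ with $y\in[1/(8q_n^{(1)}),1/(4q_n^{(1)})]$, one has $q_n^{(1)}x_1=q_n^{(1)}y+k\,q_n^{(1)}\a_1$, where $q_n^{(1)}y\in[1/8,1/4]$. For the second term I would invoke the best-approximation bound recalled in the proof of Lemma \ref{Lem}, namely $\lVert q_n^{(1)}\a_1\rVert<1/q_{n+1}^{(1)}$, so that $k\,q_n^{(1)}\a_1$ lies within $|k|\,\lVert q_n^{(1)}\a_1\rVert\le 10T/q_{n+1}^{(1)}$ of the integer $k p_n^{(1)}$. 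Since $T$ is fixed while $q_{n+1}^{(1)}\to\infty$, for all $n$ large this correction is $<1/100$, uniformly in $|k|\le 10T$ and in the sign of $k$.

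I would then conclude: $\{q_n^{(1)}x_1\}\in[1/8-1/100,\,1/4+1/100]\subset[1/50,\,1/2-1/50]$, which is exactly the condition for $x_1$ to belong to $\cW(n,1/50,1)$. Hence each slice $J_{1,n}+k\a_1$ is a $(n,1/50,1)$-good $1$-interval, and since the boxes $(R_\a^k J_n,s)$ are pairwise disjoint the slicing exhibits $A_n$ as a disjoint union of such good $1$-intervals. The point that requires attention—rather than a genuine obstacle—is the order of quantifiers in the uniformity: one must fix $T$ first and only afterwards take $n$ large, so that the accumulated drift $10T/q_{n+1}^{(1)}$ of the $q_n^{(1)}$-nearly-periodic orbit stays below the margin separating $[1/8,1/4]$ from the endpoints $0$ and $1/2$ of the good window. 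The numerics $1/8-1/100>1/50$ and $1/4+1/100<1/2-1/50$ leave ample room, so no sharper estimate is needed.
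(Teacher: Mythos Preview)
Your argument is correct and is exactly the computation the paper leaves implicit: the paper's own proof is the single sentence ``Follows directly from the fact that $A_n$ is a disjoint union of sets of the form $(R^k_\a J_n,s) \subset M$, $|k|\leq 10T$.'' Your use of $\lVert q_n^{(1)}\alpha_1\rVert<1/q_{n+1}^{(1)}$ together with $q_n^{(1)}y\in[1/8,1/4]$ for $y\in J_{1,n}$, and the observation that $T$ is fixed while $q_{n+1}^{(1)}\to\infty$, is precisely the intended justification.
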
 

\begin{proof} Follows directly from the fact that $A_n$ is a disjoint union of sets of the form $(R^k_\a J_n,s) \subset M, |k|\leq 10T$. 
\end{proof} 

Consider any $1$-interval $I$ in the decomposition of $A_n$ given by Claim \ref{claim.decomposition}. 
Observe that since $f_n,g_n \in \mathcal G(J_n,T,C)$ (see Definition \ref{def.class}) then 
$$\Nzero(f_n,g_n) \leq C|J_n|^{-1}, \quad \None(f,g,1) \leq Cq_n^{(1)} |J_n|^{-1}\leq t^{\epsilon/100}\lambda_4(J)^{-1}.$$
Since $f_n\in \mathcal F(J_n,T,C)$, Propositions \ref{propo.us} and \ref{propo.mix} and the fact that the $1$-interval $I$ is $(n,1/50,1)$-good, imply  
\begin{align*}
    \left\lvert \int_{I}f(T^{t}_{\alpha,\varphi}z)g(z)d\lambda(z) \right\rvert&\leq C \{\Nzero(f,g))\frac{\lambda(I)}{S^{t}_{I}}+\None(f,g,1)\frac{\lambda(I)}{r^{t}_{I}}\},\\ &\leq 
    Ct^{-1+2\epsilon}|J_n|^{-1}\lambda(I).
\end{align*}
Integrating over $I$, we get by Fubini 
\begin{align} \label{eq111}
    \left\lvert \int_{M}f(T^{t}_{\alpha,\varphi}z)g(z)d\mu(z) \right\rvert=\left\lvert \int_{A_n}f(T^{t}_{\alpha,\varphi}z)g(z)d\mu(z) \right\rvert&\leq \nonumber Ct^{-1+2\epsilon}|J_n|^{-1}\mu(A_n)\\&\leq Ct^{-1+2\epsilon}.
    \end{align}

For $t>q^{(1)}_{n+1}/(n+1)$, we use Corollary \ref{coro.abs} and the fact that $\None(f_n,g_n)\leq {(q^{(1)}_{n+1})}^{\epsilon/100}$ to see that 
\begin{equation} \label{eq222}
    \left\lvert \int_{M}f(T^{t}_{\alpha,\varphi}z)g(z)d\mu\right\rvert\leq t^{-1/2-\eps/2}.
\end{equation}
In conclusion, we have proved that  for $|t|\geq T_{J_n}$
\begin{equation*}
  \left\lvert \int_{M}f(T^{t}_{\alpha,\varphi}z)g(z)d\mu\right\rvert\leq C t^{-1/2-\eps/2}.\end{equation*}
Squaring and integrating and using $T_{J_n}\to \infty$ as $n\to \infty$ we get the desired decay. 
\end{proof}

\subsection{Proof of Theorem \ref{Theo1}}
The proof follows directly from Theorems \ref{coro.abs}, \ref{theo.criterion}, and \ref{theo.cils}. \hfill $\Box$

\section{The decay of correlations criterion for Infinite Lebesgue Spectrum}
 \label{secCILS}

\subsection{The general abstract criterion} Our criterion for countable Lebesgue spectrum of smooth flows is based on the following abstract criterion that gives lower bounds on the multiplicity of a  strongly continuous one-parameter unitary group $\{\Phi_t\}_{t\in \R}$ on 
 on a separable Hilbert space $H$ with absolutely 
continuous spectrum. The criterion is essentially the same as in Theorem 5 in \cite{FFK}. The proof  that we give of this theorem is slightly simpler than the one in \cite{FFK}. 

Before we state the criterion, recall that the spectral theorem asserts that for every $f \in H$, there exists a positive spectral measure $\nu_f$ on $\R$ such that 
$\langle f\circ \Phi_t,  f\rangle=\int_\R e(t\theta)d\nu_f(\theta)$.  
Since we assume that $\{\Phi_t\}$ has absolutely continuous spectrum, we have that $\nu_f$ is given by a positive density function with respect to Lebesgue measure.

Given a measurable set $C\subset \R$ and an $L^2$ function $G$ on $\R$, we denote by $\|G\|_{L^2(C)}$ the $L^2$ norm of the restriction of $G$ on $C$. Unless specified, $L^2$ norms will be considered with respect to the Lebesgue measure $\lambda$ on $\R$.

\begin{theorem} \label{thm:Criterion}  For a fixed $n\in \N$, let us assume that  for every bounded set $C\subset \R\setminus \{0\}$ of positive Lebesgue measure there exists $\epsilon_{n,C}>0$ such that 
the following holds.  For every $\epsilon \in (0, \epsilon_{n,C})$ 
there exist vectors $f_1, \dots, f_{n} \in H$  such that
\begin{align} \label{cils1}
\Vert \langle f_i\circ \Phi_t,  f_j\rangle  \Vert_{L^2} &\leq  \delta_{ij} +  \epsilon\,, \quad 
\text{ for all } i, j \in {1, \dots, n}\,; \\ \label{cils2}
\|F_i^2-1\|_{L^2(C)}&\leq \eps,  \quad 
\text{ for all } i \in {1, \dots, n}\,,
\end{align}
where $F_i^2(\cdot)$ is the density of the  spectral measure associated to $f_i$.

Then the spectral type of $\{\Phi_t\}_{t\in \R}$ is Lebesgue with multiplicity at least $n$.
\end{theorem}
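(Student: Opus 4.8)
The plan is to establish Theorem~\ref{thm:Criterion} by constructing, inside the cyclic subspaces generated by the $f_i$, a finite collection of mutually almost-orthogonal vectors whose spectral densities are almost the indicator of an arbitrary bounded set $C$, and then invoking the standard characterization of Lebesgue multiplicity: a unitary group with absolutely continuous spectrum has multiplicity at least $n$ if and only if one can find $n$ generating vectors whose cyclic spaces are ``independent'' over arbitrarily large frequency windows. Concretely, I would fix $n$ and a bounded $C\subset\R\setminus\{0\}$ of positive measure, apply the hypothesis to obtain $f_1,\dots,f_n$ satisfying \eqref{cils1}--\eqref{cils2} for $\epsilon$ small, and show that these vectors generate $n$ cyclic subspaces $Z(f_1),\dots,Z(f_n)$ whose spectral measures, when restricted to $C$, are jointly nearly the Lebesgue measure and whose pairwise cross-correlations are small. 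Taking $C$ to exhaust $\R\setminus\{0\}$ and $\epsilon\to 0$ then forces the multiplicity function to be at least $n$ on a full-measure set, which is exactly the conclusion.

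The first concrete step is to translate conditions \eqref{cils1} and \eqref{cils2} into the language of spectral measures via Bochner's theorem. Writing $\langle f_i\circ\Phi_t,f_j\rangle=\int_\R e(t\theta)\,d\nu_{f_i,f_j}(\theta)$ for the complex spectral measure, the map sending $\theta\mapsto\frac{d\nu_{f_i,f_j}}{d\lambda}(\theta)=:G_{ij}(\theta)$ realizes the correlation as a Fourier transform, so that $\|\langle f_i\circ\Phi_t,f_j\rangle\|_{L^2(dt)}=\|G_{ij}\|_{L^2(d\lambda)}$ by Plancherel. Thus \eqref{cils1} says $\|G_{ij}\|_{L^2}\leq\delta_{ij}+\epsilon$: the diagonal densities $G_{ii}=F_i^2$ have $L^2$ norm near $1$, and the off-diagonal densities are small in $L^2$. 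Condition \eqref{cils2} says moreover that on $C$ each diagonal density is close to $1$ in $L^2(C)$. The key observation to extract is that, by Cauchy--Schwarz, the off-diagonal densities restricted to $C$ are also small, so that on $C$ the matrix-valued density $(G_{ij}(\theta))_{i,j}$ is close in $L^2(C)$ to the identity matrix.

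The heart of the argument is then a linear-algebra-in-$L^2$ step: I would show that almost-orthonormality of the density matrix on $C$, together with absolute continuity, implies that the vectors $f_1,\dots,f_n$ have cyclic subspaces whose spectral projections onto frequencies in $C$ span an $n$-dimensional space in the Hilbert-space direct-integral decomposition $H\cong\int_\R^{\oplus}H_\theta\,d\lambda(\theta)$. Because the multiplicity of $\{\Phi_t\}$ is the essential dimension $\theta\mapsto\dim H_\theta$, it suffices to prove that $\dim H_\theta\geq n$ for a.e.\ $\theta\in C$; and the near-identity Gram structure of the fibered vectors $\{f_i(\theta)\}_{i=1}^n$ in $H_\theta$ (integrated against any test function on $C$) forces them to be linearly independent on a positive-measure subset of $C$ once $\epsilon$ is small. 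Letting $C$ range over an exhaustion of $\R\setminus\{0\}$ and $\epsilon\to 0$ upgrades this to $\dim H_\theta\geq n$ for a.e.\ $\theta$, hence multiplicity $\geq n$ everywhere, and absolute continuity gives that the type is Lebesgue.

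The main obstacle I anticipate is the passage from the \emph{integrated} $L^2(C)$ estimates on the scalar densities $G_{ij}$ to a genuine \emph{pointwise a.e.} lower bound on the fiber dimension $\dim H_\theta$. The conditions \eqref{cils1}--\eqref{cils2} only control averages over $C$, so an adversarial configuration could concentrate all the near-degeneracy of the Gram matrix on a small subset of $C$; one must argue that shrinking $\epsilon$ and intersecting over a countable family of sets $C$ rules this out, via a Chebyshev/Borel--Cantelli type argument on the set where the fibered Gram determinant is small. Making this quantitative --- i.e.\ estimating $\lambda\{\theta\in C:\det(G_{ij}(\theta))_{i,j}<1/2\}$ in terms of $\epsilon$ --- is the delicate point, and is exactly where I expect the ``slightly simpler'' streamlining over \cite{FFK} to enter: rather than building explicit approximate eigenvectors, one reduces directly to the statement that the multiplicity function dominates the rank of the a.e.-defined density matrix.
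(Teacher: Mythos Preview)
Your proposal is correct and follows essentially the same approach as the paper's proof: translate \eqref{cils1}--\eqref{cils2} into statements about the spectral densities $G_{ij}$ via Plancherel, then use a Chebyshev-type estimate to find $\theta$'s in $C$ where the fibered Gram matrix $(G_{ij}(\theta))$ is close to the identity, forcing $n$ nearly orthonormal vectors into the fiber $H_\theta$. The paper merely packages this as an explicit contradiction (taking $C$ inside the set where multiplicity $<n$ and finding $n$ nearly orthonormal vectors in $\mathbb{C}^{n-1}$ via the cyclic decomposition), whereas you phrase it in direct-integral language and let $\epsilon\to 0$, but the content is the same.
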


\begin{proof} Since the maximal spectral type is absolutely continuous with respect to Lebesgue, we can assume that the sequence $(\mu_k)$ is given by $\mu_{k}=\phi_{k}(\theta)\frac{1}{1+\theta^2}d\theta$ where $\phi_{k}$ are the characteristic functions of nested measurable sets $C_k$ on $\mathbb{R}$.

Let $\bigoplus_{k\in \mathbb{N}}H_{k}$ denote the orthogonal  decomposition of $H$ into cyclic sub-spaces of $\Phi_t$ such that for all $i\in\mathbb{N}$, we have $H_{k}\simeq L^{2}(\mathbb{R},\mu_{k})$.

Let us assume by contradiction that the spectrum is not Lebesgue with multiplicity at least $n$. Then, there exists a compact set $\widehat{C}$ of positive Lebesgue measure such that $\phi_{k}=0$ on $\widehat{C}$ for every $k\geq n$. 

Take $C=\widehat{C}$, and  $f_{1},..,f_{n}\in H$ be as in Theorem \ref{thm:Criterion} with $\epsilon \ll1$ to be specified later.

Now for any $U\in L^{\infty}(\mathbb{R},\lambda)$ such that $\lVert U\rVert_{\infty}\leq 1$, take $h=h_{C,U} \in L^2(\mathbb{R})$ to be the inverse Fourier transform of $\chi_C U$: \begin{equation*}
   \chi_{C}(\theta)U(\theta)=\int_{\mathbb{R}}h(s)e(\theta s)ds.
\end{equation*}
We have that $\|h\|_{L^2}\leq \lambda(C)$.
 
 For every $i\in \{1,\ldots,n\}$, let $f_{i}^{1},f_{i}^{2},\ldots$ denote the successive orthogonal projections of $f_{i}$ on the spectral decomposition $\bigoplus L^{2}(\mathbb{R},\mu_{k})$. By the definition of the spectral isomorphism the projections of $f_i\circ \Phi_s$ are given by $e(s\cdot)f_{i}^{1}(\cdot),e(s\cdot)f_{i}^{2}(\cdot),\ldots$

 For every $i\in \{1,\ldots,n\}$, introduce the function $\nu_i\in H$:$$\nu_{i}=\int_{\mathbb{R}}h(s)f_{i}\circ \Phi_{s}ds.$$
 Since $\mu_k$ vanishes on $C$ for every $k\geq n$, we have that $\nu_i$ projects on $\chi_{C}U f_{i}^{1}\oplus \chi_{C}U f_{i}^{2}\oplus \ldots \oplus\chi_{C}U f_{i}^{n-1}$.

 We apply assumption \eqref{cils1} to $(f_{i},f_{j})$ for any $i\neq j\in \{1,\ldots,n\}^2$. By the Cauchy Schwartz inequality, we obtain
\begin{align*}
     <\nu_{i},f_{j}>&= \int_{\mathbb{R}}<h(s)f_{i}\circ \Phi^{s},f_{j}> ds  \\ &=   \int_{\mathbb{R}} h(s) <f_{i}\circ \Phi^{s},f_{j}> ds  \\ &\leq \|h\|_{L^2} \|<f_{i}\circ \Phi^{s},f_{j}>\|_{L^2} \\ &\leq \lambda(C) \eps.
\end{align*}

We compute the same quantity using the spectral identification 
\begin{equation*}
\begin{split}
    \mid <\nu_{i},f_{j}>\mid &=\mid \int_{\mathbb{R}}<h(s)f_{i}\circ \Phi^{s},f_{j}> ds \mid \\ &= 
    \mid \int_{\R}\sum_{k=1}^{\infty} \chi_C(\theta)U(\theta) f^{k}_{i}(\theta)f^{k}_{j}(\theta) \phi_{k}(\theta)\frac{1}{1+\theta^2} d\theta\mid
    \\ &=\mid \int_{C}U(\theta)(\sum_{k=1}^{n-1}f^{k}_{i}(\theta)f^{k}_{j}(\theta)) \phi_k(\theta)\frac{1}{1+\theta^2}d\theta\mid.
    \end{split}
\end{equation*}
Define
$$C_{i,j}=\{\theta \in  C: \sum_{k=1}^{n-1}f^{k}_{i}(\theta)f^{k}_{j}(\theta) > \epsilon^{1/10}\}.$$
Taking $U=\chi_{C_{i,j}}$ we get for $\eps>0$ sufficiently small that  $\lambda(C_{i,j})\leq \lambda(C)\eps^{0.85}$. Doing the same for negative values we get that if $\eps>0$ is sufficiently small \begin{equation}
 \lambda\left(\left\{\theta \in  C: \exists i\neq j\in \{1,\ldots,n\}^2, \  |\sum_{k=1}^{n-1}f^{k}_{i}(\theta)f^{k}_{j}(\theta)|>\epsilon^{1/10}\right\}\right)<\epsilon^{0.8}. \label{HH1}
\end{equation}

We now observe that 
$$F_i^2(\theta)=\sum_{k=1}^\infty |f^{k}_{i}(\theta)|^2\phi_{k}(\theta)\frac{1}{1+\theta^2}.$$
Hence, the second assumption \eqref{cils2} of the theorem implies that 
$$\| \sum_{k=1}^{n-1} |f^{k}_{i}(\theta)|^2\phi_{k}(\theta)\frac{1}{1+\theta^2}-1\|_{L^2}\leq \eps.$$
This implies that 
\begin{equation} \label{HH2}
\lambdaÊ\left(   \left\{\theta \in  C:  \exists i \in \{1,\ldots,n\}, \ \sum_{k=1}^{n-1}|f^{k}_{i}(\theta)|^2\notin [1/2,2]\right\}\right)<\epsilon^{0.9}.
\end{equation}

Finally, conditions (\ref{HH1}) and (\ref{HH2}) imply that there exists $\theta_{0}$ such that the vectors 
$$v_i=(f^{1}_{1}(\theta_{0}),..,f^{n-1}_{i}(\theta_{0})), \quad i\in \{1,\ldots,n\},$$ 
satisfy for all $(i,j)\in \{1,\ldots,n\}^2$, $j\neq i$
$$|(v_i|v_j)|<\epsilon^{0.1}, \quad (v_i|v_i)\in[\frac{1}{2},2],$$
where $(\cdot|\cdot)$ denotes the Euclidean scalar product on $\mathbb C^{n-1}$. 

Since this is impossible, we conclude that the spectrum is  Lebesgue with multiplicity at least $n$.\end{proof}

\subsection{The proof of Theorem \ref{theo.criterion}}

We want to use the assumption of Theorem \ref{theo.criterion} to check the validity of the criterion of Theorem \ref{thm:Criterion}. In this section, the proofs will be exactly the same as the ones given in Section 7 of \cite{FFK}, except for \eqref{chij} where the control on the derivatives of the compactly supported functions that we use in the construction are given in the various directions of the base $J$. Notice that the control that we require in \eqref{chij} is exactly the one that allows to guarantee the fast decay of correlations  that we obtained in Theorem \ref{theo.cils}.

First of all, we recall the following corollary of  Theorem \ref{thm:Criterion} that was proved in \cite{FFK}. 

\begin{corollary} \label{cor:criterion} Let us assume that for every $n \in \N$, for any even functions $\omega_1, \dots, \omega_{n} \in \mathcal S(\R)$ (the Schwartz space), and for any any~$\epsilon >0$,  there exists  $f_1, \dots, f_{n} \in H$ such that, for all $i, j \in \{1, \dots, n\}$, we have
$$
\Vert \langle f_i\circ \phi_t,  f_j \rangle  -  \frac{d^2}{dt^2} \omega_i \ast \omega_i(t) \delta_{ij}  \Vert_{L^2(\R)} \leq \epsilon\,.
$$
Then the spectral type of the strongly continuous one-parameter unitary group $\phi_\R$ is Lebesgue with countable multiplicity.
\end{corollary}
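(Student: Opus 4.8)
The statement to prove is Corollary \ref{cor:criterion}, which deduces countable (infinite) Lebesgue multiplicity from a family-of-observables hypothesis phrased in terms of Schwartz functions $\omega_i$ and the quantity $\frac{d^2}{dt^2}\omega_i\ast\omega_i$. The plan is to derive this from Theorem \ref{thm:Criterion} by checking, for each fixed $n$, the two hypotheses \eqref{cils1} and \eqref{cils2}. The guiding idea is that for an even Schwartz function $\omega_i$, the object $\frac{d^2}{dt^2}\omega_i\ast\omega_i(t)$ is, up to sign and factors of $2\pi$, the inverse Fourier transform of $\theta^2|\widehat{\omega_i}(\theta)|^2$, so its $L^2(\R)$-profile in $t$ encodes a nonnegative spectral density $F_i^2(\theta)=\theta^2|\widehat{\omega_i}(\theta)|^2$ (or the appropriately normalized version consistent with the $\frac{1}{1+\theta^2}$ weight used in the proof of Theorem \ref{thm:Criterion}). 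Thus the correlation profile prescribed in the Corollary's hypothesis is exactly the autocorrelation $\langle f_i\circ\Phi_t,f_i\rangle$ one wants $f_i$ to have, and the content is to convert a prescribed nonnegative density into the orthonormality-plus-density conclusion needed by the abstract criterion.

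The key steps, in order, are as follows. First, I would fix $n$ and an arbitrary bounded $C\subset\R\setminus\{0\}$ of positive measure, and reduce the verification of Theorem \ref{thm:Criterion} to producing, for any prescribed tolerance, Schwartz densities that are $\eps$-close to $1$ on $C$. Concretely, I would choose the even functions $\omega_1,\dots,\omega_n$ so that the induced densities $F_i^2$ are uniformly close to $1$ on the compact set $C$; this is possible because one can approximate the constant $1$ on a compact set by smooth bumps of the form $\theta^2|\widehat{\omega_i}|^2$ (dividing out the known weight), which gives \eqref{cils2}. Second, with these $\omega_i$ fixed, I would invoke the Corollary's hypothesis with this choice and tolerance $\eps$ to obtain vectors $f_1,\dots,f_n\in H$ with
\[
\Vert \langle f_i\circ\Phi_t, f_j\rangle - \tfrac{d^2}{dt^2}\omega_i\ast\omega_i(t)\,\delta_{ij}\Vert_{L^2(\R)}\leq\eps.
\]
Third, for $i\neq j$ the target is $0$, so this immediately yields $\Vert\langle f_i\circ\Phi_t,f_j\rangle\Vert_{L^2}\leq\eps$, which is \eqref{cils1} off-diagonal; for $i=j$, combining the hypothesis with the fact that $\frac{d^2}{dt^2}\omega_i\ast\omega_i$ is the correlation profile of a density close to $1$ on $C$ gives the diagonal bound $\Vert\langle f_i\circ\Phi_t,f_i\rangle\Vert_{L^2}\leq 1+\eps$ and, via Fourier/Plancherel, the density statement \eqref{cils2}. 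Fourth, I would apply Theorem \ref{thm:Criterion} to conclude multiplicity at least $n$, and finally let $n\to\infty$ to obtain countable multiplicity and a Lebesgue maximal spectral type.

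The step I expect to be the main obstacle is the careful bookkeeping in the diagonal case, namely matching the prescribed profile $\frac{d^2}{dt^2}\omega_i\ast\omega_i$ to the spectral density $F_i^2$ through the Fourier transform while respecting the $\frac{1}{1+\theta^2}$ normalization used inside the proof of Theorem \ref{thm:Criterion}. One must verify that the $L^2(\R)$-closeness of correlation functions (in the time variable $t$) translates, via Plancherel, into $L^2$-closeness of the corresponding densities (in the spectral variable $\theta$), and that the even/real structure of $\omega_i$ guarantees the resulting density is genuinely nonnegative and equals $\theta^2|\widehat{\omega_i}(\theta)|^2$ up to the fixed weight. Since this is precisely the content of the corollary proved in \cite{FFK}, I would follow that argument, taking care that the approximation of $1$ on $C$ by admissible densities is uniform enough that both \eqref{cils1} and \eqref{cils2} hold simultaneously for the same small $\eps$; the rest is a direct application of the abstract criterion.
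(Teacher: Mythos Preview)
Your proposal is correct and follows essentially the same approach as the paper, whose proof is a single sentence asserting that the hypothesis of the corollary produces $f_1,\dots,f_n$ satisfying \eqref{cils1} and \eqref{cils2}. Your sketch simply spells out the natural steps the paper leaves implicit: choose the even $\omega_i$ so that the target density $\theta^2\vert\widehat{\omega_i}(\theta)\vert^2$ is close to $1$ on $C$, invoke the hypothesis to get the $f_i$, read off the off-diagonal bound in \eqref{cils1} directly, and use Plancherel to pass from $L^2(\R)$-closeness of $t\mapsto\langle f_i\circ\Phi_t,f_i\rangle$ to $L^2(C)$-closeness of $F_i^2$ to $1$ for \eqref{cils2}.
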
 

The fact that Theorem \ref{thm:Criterion} Êimplies Corollary \ref{cor:criterion} is clear since the assumption of the corollary allows to produce $f_1, \dots, f_{n} \in H$ 
that satisfy \eqref{cils1} and \eqref{cils2}.

We will derive Theorem \ref{theo.criterion} from Corollary \ref{cor:criterion}. Since we only control the decay of correlations for functions  in the classes $\mathcal F(J,T,C,\zeta)$ and $\mathcal G(J,T,C,\zeta)$, we need a simple approximation lemma to approximate the target functions $\omega_1, \dots, \omega_{n}$ by  functions supported inside sets of the type $S^T_\zeta(J)$, that we take as is from \cite{FFK}.

\begin{lemma}[{\cite[Lemma 7.4]{FFK}}] \label{lemma.psi} Let $\{F_J\}$ be a family of maximal flow-boxes. For every $\epsilon>0$,  
and even function $\omega \in \mathcal S(\R)$, there exist $\tau:=\tau(\epsilon,\omega)>0$ with the following property. For every $T\geq \tau$, there exist constants $C:=C(\epsilon,\omega,T)>0$ and $\zeta:= \zeta(\epsilon, \omega, T)>0$ such that, for any $F_J\in \Phi$
with $T_J>T$, there exists an even function $\psi \in C^\infty_0 (-T,T) $ satisfying
\begin{enumerate}
\item[$(a)$] $\frac{d\psi}{dt} \in  C^\infty_0 ( S^T_\zeta(J)\cap (- S^T_\zeta(J)))$;
\item[$(b)$] the $C^{2}$ norm is bounded  above by C; 
\item[$(c)$]
\begin{equation*}  
\Vert \frac{d^2}{dt^2} (\psi \ast \psi) - \frac{d^2}{dt^2} (\omega \ast \omega)  \Vert_{L^2(\R)} <  \epsilon\,. 
\end{equation*} 
\end{enumerate}
\end{lemma}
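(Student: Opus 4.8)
The plan is to reduce condition $(c)$ to a single convolution estimate and then construct $\psi$ as a primitive of a cut-off copy of $\omega'$. Since $(f\ast f)''=f'\ast f'$ for any Schwartz $f$, setting $g:=\omega'$ (an odd Schwartz function) turns $(c)$ into $\Vert \psi'\ast\psi'-g\ast g\Vert_{L^2}<\epsilon$. I would therefore seek $\psi$ of the form $\psi(t)=\int_{-\infty}^{t}p$, where $p:=g\chi$ and $\chi\in C^\infty_0(\R)$ is an \emph{even} cut-off supported in the symmetric good set $G:=S^T_\zeta(J)\cap(-S^T_\zeta(J))$ and equal to $1$ away from small neighbourhoods of its complement. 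Because $g$ is odd and $\chi$ is even, $p$ is odd, so $\int_\R p=0$; a direct check then shows $\psi$ is even and lies in $C^\infty_0(-T,T)$, while $\psi'=p=g\chi$ is supported in $G$, which is precisely $(a)$.

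For the quantitative part I would use the $L^1$–$L^2$ Young inequality $\Vert p\ast p-g\ast g\Vert_{L^2}\le \Vert p\Vert_{L^1}\Vert p-g\Vert_{L^2}+\Vert g\Vert_{L^2}\Vert p-g\Vert_{L^1}$, so that it suffices to make $\Vert p-g\Vert_{L^1}$ and $\Vert p-g\Vert_{L^2}$ small, the remaining norms being bounded by those of $g$. Now $p-g=-g(1-\chi)$ is supported in the tail $\{|t|\ge T\}$ and in a neighbourhood of the bad set $B:=(-T,T)\setminus G$. Since $g$ is Schwartz, the tail contribution is $<\epsilon/4$ as soon as $\tau=\tau(\epsilon,\omega)$ is large enough; this is where $\tau$ gets fixed. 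Everything then hinges on showing that $\lambda(B)$ can be made arbitrarily small, uniformly over all $F_J$ in the family with $T_J>T$, by taking $\zeta$ small.

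The heart of the argument is this bad-set estimate. A time $t$ lies in $B$ exactly when the fibre coordinate of $T^t(x,0)$ exceeds $\varphi(\cdot)-\zeta$ for some $x\in J$, i.e. when $t$ lies in a window of length $\zeta$ ending at one of the ceiling-wrap times $S_k\varphi(x)=\sum_{0\le i<k}\varphi(x+i\alpha)$ with $|k|\lesssim T$. As $x$ ranges over $J$ the value $S_k\varphi(x)$ varies by at most $\mathrm{osc}_J\,S_k\varphi\le |k|\,\mathrm{Lip}(\varphi)\,\mathrm{diam}(J)$, the rotation being an isometry. Hence the union of windows at level $k$ is an interval of length $\le \zeta+|k|\,\mathrm{Lip}(\varphi)\,\mathrm{diam}(J)$, and summing over the $O(T)$ relevant values of $k$ yields $\lambda(B)\le C\,(T\zeta+T^2\,\mathrm{Lip}(\varphi)\,\mathrm{diam}(J))$. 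In the family at hand $T_J\to\infty$ forces $\mathrm{diam}(J)\to0$ (for the boxes used later one has $\mathrm{diam}(J)\asymp 1/q_n^{(1)}\to0$), so for $F_J$ with $T_J>T$ the second term is as small as desired, and choosing $\zeta=\zeta(\epsilon,\omega,T)$ small handles the first.

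With $\lambda(B)$ small, I would finally fix $\chi$ so that it drops from $1$ to $0$ across transition bands of a single width $\eta=\eta(\epsilon,\omega,T)$ placed inside $G$ on either side of each window and near $\pm T$; this discards only $O(T\eta)$ extra mass of $g^2$, kept below $\epsilon$, and gives $\Vert\chi'\Vert_0\lesssim 1/\eta$. This delivers $(c)$, and since $\psi''=g'\chi+g\chi'$ the $C^2$ bound $(b)$ follows with $C=C(\epsilon,\omega,T)$ (absorbing the dependence on $\eta$ and $\zeta$). I expect the main obstacle to be the uniform bad-set estimate of the third paragraph: one must quantify how the ceiling-wrap times spread over the base $J$ and verify this spread is negligible simultaneously for every flow-box with $T_J>T$. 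This is exactly where the Lipschitz regularity of $\varphi$ and the fact that large return time forces small diameter in our family are indispensable.
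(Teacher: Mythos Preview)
The paper does not prove this lemma; it is quoted verbatim from \cite{FFK} (``that we take as is from \cite{FFK}''), so there is no in-paper argument to compare your proposal against. Your outline---writing $\psi'=\omega'\chi$ with $\chi$ an even cut-off supported in $G$, observing that oddness of $\omega'\chi$ makes the primitive even and compactly supported, and reducing $(c)$ via $(\psi\ast\psi)''=\psi'\ast\psi'$ and Young's inequality---is a correct and natural reconstruction of the mechanism.

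The weak point is exactly where you place it, but your resolution is not quite right. Your bound
\[
\lambda(B)\le C\bigl(T\zeta+T^{2}\,\mathrm{Lip}(\varphi)\,\mathrm{diam}(J)\bigr)
\]
carries a factor $T^{2}$, and in the lemma $T\ge\tau$ is allowed to be \emph{arbitrarily large after} $\epsilon,\omega,\tau$ are fixed; so ``$T_J>T$ forces $\mathrm{diam}(J)$ small'' does not by itself make $T^{2}\,\mathrm{diam}(J)$ small. The correct fix is to use that $g=\omega'\in\mathcal S(\R)$ is concentrated on a \emph{fixed} window $[-\tau_0,\tau_0]$ with $\tau_0=\tau_0(\epsilon,\omega)$ independent of $T$: outside this window $g$ already contributes less than $\epsilon/4$, so you only need to control $B\cap[-\tau_0,\tau_0]$. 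There are $O(\tau_0)$ wrap levels in that window, giving
\[
\lambda\bigl(B\cap[-\tau_0,\tau_0]\bigr)\le C\tau_0\,\zeta+C\tau_0^{2}\,\mathrm{diam}(J),
\]
now free of $T$. The $\zeta$-term is handled by the choice of $\zeta(\epsilon,\omega,T)$, but the $\mathrm{diam}(J)$ term still requires an a~priori smallness of $J$ that $T_J>T$ alone does not give for a general family of flow-boxes. In \cite{FFK} and in the present paper this is harmless because the family is the specific sequence $J_n$ with $\mathrm{diam}(J_n)\to 0$, and one may enlarge $\tau$ (implicitly depending on the family, which is fixed at the outset of the lemma) so that $T_J>T\ge\tau$ forces $\mathrm{diam}(J)<\epsilon/(C\tau_0^{2})$. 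You should make this dependence explicit rather than asserting uniformity over all maximal flow-boxes.
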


\begin{proof}[Proof of Theorem \ref{theo.criterion}]

Let us fix $\eps>0$ and any given number $n\in \N \setminus\{0\}$ of even Schwartz functions $\omega_1, \dots, \omega_{n} \in \mathcal S(\R)$.   
Let $\Phi=\{F_J\}$ be a family of maximal flow-boxes. 

By Lemma \ref{lemma.psi}  there exists $\tau>0$ and, for all $T>\tau$,  there exists $\zeta>0$ (small) such that, for every multi-interval $J=I_1\times \ldots \times I_4 \subset \T^4$, with $T_J>\tau$, there exist even functions  $\psi_i \in  C^\infty_0 ( (-T,T)), i=1,\ldots,n,$ with the property that  $\frac{d\psi_i}{dt}  \in  C^\infty_0 ( S^T_\zeta(J))$ and with $C^{2}$ norm uniformly bounded  above by a constant $C':=C'(\epsilon,\omega_1,\ldots,\omega_{n},T)>0$, such that 
\begin{equation}  
\label{eq:conv_approx_3}
\Vert \frac{d^2}{dt^2} (\psi_i\ast \psi_i) - \frac{d^2}{dt^2} ( \omega_i \ast  \omega_i)  \Vert_{L^2(\R)} <  \epsilon/2\,. 
\end{equation} 
Let  now $\chi^{(1)} _J, \dots, \chi^{(n)} _J \in C^\infty(J)$ be functions  such that 
$$\int_J   \chi^{(i)}_J  \chi^{(j)}_J   d\lambda  =   \delta_{ij}  \,,  \quad  \text{for all } i,j \in \{1, \dots, n\}\,,$$
and such that for $j\in \{1,\ldots,4\}$
\begin{equation}Ê\label{chij} \|\chi^{(i)}_J\|_{0} \leq C''\lambda_4 (J)^{-1/2}, \quad \|\chi^{(i)}_J\|_{1,j} \leq C''\lambda_4 (J)^{-1/2}{|I_j|}^{-1},\end{equation} 
  (this is possible provided that the constant $C''$ is taken to be larger than some constant that only depends on $n$). 

Let $C>\max \{C', C''\}$.
For every $i\in \{1, \dots, n\}$,  let $f^{(i)}_J \in \mathcal F(J,T,C,\zeta)$  be the function defined
on the range $R^T_J$ of the flow-box map $F^T_J$ as 
\begin{equation*}
f^{(i)}_J \circ   F^T_J (x, t) :=   \chi^{(i)}_J (x) \frac{d}{dt} \psi_i (t) \,, \quad 
\text{ \rm if } \,\, (x,t) \in J \times (-T, T)\,,
\end{equation*} 
and defined as  $f^{(i)}_J=0$ on $M\setminus R^T_J$. 

\smallskip
We then compute the correlations. Let $T_J /2 > \max \{T, \tau/2\}$. For all $t \in [-T_J,T_J]$ we have (since the functions $\psi_1, \dots, \psi_{n+1}$ are all even)
\begin{align*}
\langle f^{(i)}_J\circ T^t, f^{(j)}_J \rangle  &=\int_J \int_{-T}^T   \chi^{(i)}_J(x) \chi^{(j)}_J(x)  
\frac{d\psi_i}{dt} (\sigma+t)   \frac{d\psi_j}{dt}(\sigma) d\sigma  dx
\\ &=  (\frac{d\psi_i}{dt} \ast   \frac{d\psi_j}{dt}) (t)   \, \delta_{ij}  = \frac{d}{dt^2} (\psi_i\ast \psi_j)(t) \delta_{ij}  \,.\end{align*}
By the assumption of Theorem \ref{theo.criterion} that we are proving,  if $\lambda_4(J)$ is small enough,  for every $i,j\in \{1,\ldots,n\}$ we have:
\begin{equation*} 
 \Vert \langle f^{(i)}_J\circ T^t, f^{(j)}_J \rangle  \Vert_{L^2(\R\setminus [-T_J, T_J])} \leq \epsilon/2\,.\end{equation*}
Note that, since the functions $\psi_i$ are supported in $[-T, T]$ and $T<T_J/2$, we also have
\begin{equation} \label{eq.outside2} \frac{d}{dt^2} (\psi_i\ast \psi_j)(t) \delta_{ij}  =0, \text{ for } t\in \R\setminus [-T_J, T_J]. 
\end{equation}
By putting together formulas~ \eqref{eq:conv_approx_3}--\eqref{eq.outside2}, 
it follows that  if $\lambda_4(J)$ is small enough (hence $T_J$ is large enough), the functions $f^{(i)}_J$, with $i \in \{1, \dots, n\}$, satisfy the assumptions of Corollary \ref{cor:criterion}:
$$
\Vert \langle f^{(i)}_J\circ T^t, f^{(j)}_J \rangle  -\frac{d^2}{dt^2}(\omega_i \ast \omega_j) \delta_{ij}  \Vert_{L^2(\R)} \leq \epsilon\,.
$$
It follows then by Corollary~\ref{cor:criterion} that, under the hypotheses of Theorem \ref{theo.criterion}, the flow $\{T^t\}$ has countable Lebesgue spectrum,  hence the argument is completed. \end{proof}

\medskip 

\subsection*{Acknowledgments.} The first and second author are very grateful to Giovanni Forni, Adam Kanigowski and Jean-Paul Thouvenot for enlightening discussions on this subject.

\end{document}